\newtheorem{defn}{Definition}[section]
\newtheorem{thm}{Theorem}[section]
\newtheorem{lem}{Lemma}[section]
\newtheorem{remark}{Remark}[section]
\newtheorem{corollary}{Corollary}[section]
\begin{document}
\doublespace
\begin{center}
{\Large \textbf{Generalized Moufang loops with $\alpha$-elasticity property}}\\[5mm]

{ A. O. Abdulkareem$^a$, J. O. Adeniran$^b$, A. A. A. Agboola$^c$ and G. A. Adebayo$^d$}

$ \ ^{a,b,c}$ Department of Mathematics, Federal University of Agriculture Abeokuta, Nigeria.\\[0pt]

$\ ^{a}$ Department of Mathematical Sciences, University of Africa Toru Orua, Bayelsa, Nigeria.\\

$ \ ^{d}$ Department of Physics, Federal University of Agriculture Abeokuta, Nigeria.\\[0pt]

$  ^{a}$afeezokareem@gmail.com \ $^{b}$ekenedilichineke@yahoo.com,\\
$^c$aaaola2003@yahoo.com \ $^d$adebayo@physics.unaab.edu.ng\\
\end{center}

\begin{abstract} 
 \noindent In this study, the problem of existence of universally flexible loop that is not middle Bol is partially solved. $\alpha$-elasticity property is introduced for generalized Moufang loops. Necessary and sufficient conditions for $\alpha$-elastic generalized Moufang loops to be universal are given. Using these conditions, some properties of generalized Moufang loops are obtained. Condition under which the generalized Moufang loop is an abelian group is stated.
\end{abstract}
{\bf Keywords}: Generalized Moufang loops; $\alpha$-elasticity property; $\alpha$-alternative laws; Abelian group.\\
\medskip {\bf AMS subject Classification}: 20N02; 20N05

\thispagestyle{empty}
\section{Introduction}
A Moufang loop is a loop satisfying the Moufang identities $(xy\cdot z)y = x(y\cdot zy)$. Other identities defining Moufang loops are $yz\cdot xy = y(zx\cdot y)$ and $(yz\cdot y)x = y(z\cdot yx)$. They are closely related to the class of Bol loops \cite{Rob1}. Bol loops are characterized by two identities namely; the left Bol identity $y(z.yx)=(y.zy)x$ and the right Bol identity $(xy.z)y=x(yz.y)$. When the two identities hold in a loop, the loop is said to be Moufang. 

A loop is a generalized Bol loop if it satisfies $(xy\cdot z)y^{\alpha}=x(yz\cdot y^{\alpha})$ and a generalized Moufang loop, $Q$, if $(xy\cdot z)y^{\alpha}=x(y\cdot zy^{\alpha})$ holds, where $\alpha:Q\longrightarrow Q$ is a self map. So, generalized Moufang loop is a generalized Bol loop \cite{AJI} that is elastic, $x(yx)=(xy)x$. 
The question ``Is there a finite universally elastic loop that is not middle Bol" posed by Kinyon in LOOPS'03, Prague 2003 is partially answered in affirmative and our study suggest search for such loops among (generalized) Moufang loops whose exponent is not $2$.


The universality of the elasticity property of loops was studied by Syrbu in \cite{PS1} and middle Bol loops emerge has a consequence of loops with universal elasticity property. Although, it has been reported that middle Bol loops were introduced by Belousov in 1967 and were later studied by Gwaramija in 1971\cite{PS2}.

The current study introduces $\alpha$-elasticity property for generalized Bol loops. Some authors prefer flexibility to elasticity but we shall drop flexibility for elasticity throughout this paper. Conditions of Necessity and sufficiency under which $\alpha$-elasticity property of generalized Moufang loops is universal are established. Using the universal conditions, and in some cases, with the newly introduced right and left $\alpha$-alternative property for generalized Moufang loops, some properties of generalized Moufang loops are studied. 

The paper is organized as follows. Section \ref{SEC:2} gives basic concepts and definitions of different terminologies used throughout the paper. Section \ref{SEC:3}, \ref{SEC:4} and \ref{SEC:5} contain the main results of this paper. It is established that nuclei of generalized Bol loops with universal $\alpha$-elasticity do not coincide though it is an inverse property loop. The right and middle nucleus of right inverse property loops coincide but this is not true for right inverse property generalized Moufang loops with universal law of $\alpha$-elasticity. However, left and middle nucleus do coincide in right inverse generalized Moufang loops with universal law of $\alpha$-elasticity.
\section{Basic concepts and Definitions}\label{SEC:2}
\begin{defn}
A groupoid $(L, \cdot)$ consists of a set $L$ together with a binary operation $\cdot$ on $L$.
\end{defn} 
\begin{defn}
For $x \in L$, define the left, respectively right,
translation by $x$ by $L_{x}(y) = x\cdot y$, respectively $R_{x}(y) = y\cdot x$, for all $y\in L$.
\end{defn} 
\begin{defn}
A quasigroup is a groupoid $(L, \cdot)$ with a binary operation $\cdot$ such that for each $a, b\in L$ the equations $a\cdot x = b$ and $y\cdot a = b$ have unique solutions $x, y\in L$.
\end{defn} 
\begin{defn}
A loop is a quasigroup with identity element. A left loop in which all right translations are bijective is also called a loop. 
\end{defn}
For basic facts on loops, we refer the reader to \cite{B, P}. 
\begin{defn}
A loop satisfying the right Bol identity $$((xy)z)y = x((yz)y)$$
or equivalently
$$R_{y}R_{z}R_{y}=R_{(yz)y}$$
for all $x, y, z \in L$ is called a right Bol loop. A loop satisfying the mirror identity $(x(yx))z = x(y(xz))$ for all $x, y, z\in L$ is called a left Bol loop, and a loop which is both left and right Bol is a Moufang loop.
\end{defn} 
\begin{defn}\label{Def. GBL}
A loop is a generalized Moufang loop if it satisfies the relation $$(xy\cdot z)y^{\alpha}=x(y\cdot zy^{\alpha})$$
where $y^{\alpha}$ is the image of $y$ under some mapping, $\alpha$, of the loop onto itself.
\end{defn}
Observe that the generalized Moufang identity is coming from the right Bol identity for which the elastic property holds. 
\begin{defn}
A loop $(G,\cdot)$ is called a left inverse property loop if it satisfies the left inverse property (LIP) given by: $$x^{\lambda}(xy)=y.$$
\end{defn} 
\begin{defn}
A loop $(G,\cdot)$ is called a right inverse property loop if it satisfies the right inverse property (RIP) given by: $$(yx)x^{\rho}=y.$$
\end{defn} 
\begin{defn}
A loop is called an IP loop if it is both LIP-loop and RIP-loop.
\end{defn}
\begin{defn}
Let $(G,\cdot)$ be a loop, the left nucleus, $N_\lambda$, the middle nucleus, $N_\mu$  and the right nucleus, $N_\rho$ are defined as follows:
\begin{align*}
N_\lambda &=\{x\in G|x\cdot yz=xy\cdot z \ \forall \ y,z\in G\} \\
N_\mu &=\{y\in G|x\cdot yz=xy\cdot z \ \forall \ x,z\in G\} \\
N_\rho &=\{z\in G|x\cdot yz=xy\cdot z \ \forall \ x,y\in G\}
\end{align*}
\end{defn}
\noindent The following are few results on the nucleus of generalized Bol loops.
\begin{thm}[\cite{ALA}]\label{GBLN}
If $(G,\cdot)$ is a generalized Bol loop, then $y^{\alpha}\in N_{\rho}$ if and only if $y\in N_{\mu}$. 
\end{thm}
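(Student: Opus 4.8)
The plan is to read off both implications directly from the defining identity of a generalized Bol loop, namely $(xy\cdot z)y^{\alpha}=x(yz\cdot y^{\alpha})$, by rewriting one side using the assumed nucleus membership and then letting a suitable composite variable range freely over $G$.

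For the forward direction I would assume $y\in N_{\mu}$ and rewrite the left factor $xy\cdot z$ of the identity as $x\cdot yz$, which is exactly the middle-nucleus property of $y$. The identity then reads $(x\cdot yz)y^{\alpha}=x(yz\cdot y^{\alpha})$. Setting $w=yz$ and observing that, for fixed $y$, the map $z\mapsto yz=L_{y}(z)$ is a bijection of $G$, the relation $(xw)y^{\alpha}=x(wy^{\alpha})$ holds for all $x,w\in G$, which is precisely the statement $y^{\alpha}\in N_{\rho}$.

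For the converse I would assume $y^{\alpha}\in N_{\rho}$ and apply the right-nucleus property of $y^{\alpha}$ to both sides of the generalized Bol identity: the left side becomes $(xy\cdot z)y^{\alpha}=(xy)(zy^{\alpha})$ and the right side becomes $x(yz\cdot y^{\alpha})=x\big(y(zy^{\alpha})\big)$. Equating these gives $(xy)(zy^{\alpha})=x\big(y(zy^{\alpha})\big)$. Putting $t=zy^{\alpha}$ and using that $z\mapsto zy^{\alpha}=R_{y^{\alpha}}(z)$ is a bijection of $G$, I obtain $(xy)t=x(yt)$ for all $x,t\in G$, which is exactly $y\in N_{\mu}$.

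The computation itself is short; the only point that needs care — and the main obstacle to a fully rigorous argument — is the legitimacy of the two substitutions $w=yz$ and $t=zy^{\alpha}$. Each is justified by the loop axioms, which guarantee that the translations $L_{y}$ and $R_{y^{\alpha}}$ are bijective, so that as $z$ ranges over $G$ the substituted element ranges over all of $G$; without surjectivity the rewritten identities would hold only on a subset and would not yield full nucleus membership. One should also note that $y^{\alpha}$ is a single fixed element determined by $y$ and the self-map $\alpha$, so no property of $\alpha$ beyond its being a well-defined map is required.
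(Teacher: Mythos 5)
Your proof is correct: both implications follow from the generalized Bol identity $(xy\cdot z)y^{\alpha}=x(yz\cdot y^{\alpha})$ by substituting the nucleus property and then using bijectivity of the translations $L_{y}$ and $R_{y^{\alpha}}$ to let the composite variable range over all of $G$, and you rightly flag that this bijectivity is the only point requiring justification. The paper itself states this theorem as an imported result from \cite{ALA} and gives no proof, so there is nothing to compare line by line; note, however, that your forward direction is essentially identical to the argument the paper does write out for the analogous statement in Theorem \ref{thm3.2}(ii) (there for generalized Moufang elements), where the same ``replace $yz$ by $z$'' step is used.
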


\begin{thm}[\cite{ALA}]
If $(G,\cdot)$ is a generalized Bol loop with $f\in G$, and let $u\circ v=uR_{(g)^{-1}}\cdot vL_{(f)^{-1}}$ for all $u,v\in (G,\circ)$. If $f^{\alpha}\in N_{\rho}$ of $(G,\cdot)$, then $(G,\cdot)$ and $(G,\circ)$ are isomorphic.
\end{thm}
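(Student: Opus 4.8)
The plan is to recognise $(G,\circ)$ as a principal isotope of $(G,\cdot)$ and then to promote the resulting isotopy to an isomorphism. Writing the defining relation as $u\circ v=(u/g)\cdot(f\backslash v)$, one checks at once that $(G,\circ)$ is a loop whose identity element is $e_{\circ}=f\cdot g$, so any isomorphism $\theta\colon(G,\cdot)\to(G,\circ)$ must carry $e$ to $f\cdot g$. Before constructing $\theta$ I would rewrite the hypothesis in two ways: by Theorem \ref{GBLN}, $f^{\alpha}\in N_{\rho}$ holds if and only if $f\in N_{\mu}$, and I will use the relation $f^{\alpha}\in N_{\rho}$ to control the right translation $R_{f^{\alpha}}$ and the relation $f\in N_{\mu}$ to control the left translation $L_{f}$.

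I would then organise the argument inside the autotopism group $\mathrm{Atp}(G,\cdot)$. Setting $U=R_{g}^{-1}\theta$, $V=L_{f}^{-1}\theta$ and $W=\theta$, the homomorphism condition $\theta(xy)=R_{g}^{-1}\theta(x)\cdot L_{f}^{-1}\theta(y)$ says exactly that $(R_{g}^{-1}\theta,\,L_{f}^{-1}\theta,\,\theta)\in\mathrm{Atp}(G,\cdot)$, so it suffices to produce one autotopism of this shape. The two nucleus facts supply the building blocks: $f^{\alpha}\in N_{\rho}$ gives the autotopism $(\mathrm{id},\,R_{f^{\alpha}},\,R_{f^{\alpha}})$, while $f\in N_{\mu}$ gives $(R_{f},\,L_{f}^{-1},\,\mathrm{id})$. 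Since $f^{\alpha}\in N_{\rho}$ commutes with every left translation, their product is again an autotopism, namely $(R_{f},\,L_{f}^{-1}R_{f^{\alpha}},\,R_{f^{\alpha}})$, and choosing $\theta=R_{f^{\alpha}}$ makes its second and third components coincide with the required $L_{f}^{-1}\theta$ and $\theta$. A parallel direct route is to verify that the composite translation $\theta=L_{f}R_{f^{\alpha}}$, i.e.\ $\theta(x)=f\cdot(xf^{\alpha})=(fx)f^{\alpha}$, which sends $e$ to $f\cdot f^{\alpha}$, respects $\circ$; using the cancellations $\big((fx)f^{\alpha}\big)/f^{\alpha}=fx$ and $f\backslash\big(f(yf^{\alpha})\big)=yf^{\alpha}$ together with $(xy)f^{\alpha}=x(yf^{\alpha})$, this reduces to a single associativity statement.

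The hard part, in either route, is the first component. Matching demands $R_{g}^{-1}R_{f^{\alpha}}=R_{f}$, equivalently that the composite of right translations $R_{f}R_{f^{\alpha}}^{-1}$ be itself a single right translation $R_{g}^{-1}$; in the direct route the same point appears as the need to pull $f$ across the left-hand factor, an operation that the middle-nucleus hypothesis alone does not license. This is precisely where the generalized Bol identity must do the work. My plan is to use it in operator form $R_{y^{\alpha}}R_{z}R_{y}=R_{(yz)y^{\alpha}}$, specialised at $y=f$ (with the case $z=e$ giving $R_{f^{\alpha}}R_{f}=R_{ff^{\alpha}}$), to recognise the relevant composite as a right translation and thereby pin down $g$ and close the autotopism. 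I expect this reconciliation of the first component to be the main obstacle; everything else is the routine bookkeeping of translations and divisions.
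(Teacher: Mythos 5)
The first thing to say is that the paper itself offers no proof of this statement: it is quoted verbatim from \cite{ALA} as background, so there is no in-paper argument to compare yours against, and your attempt has to stand on its own. Judged that way, your framework is correct as far as it goes: $(G,\circ)$ is the principal $f,g$-isotope with $\circ$-identity $fg$; an isomorphism $\theta:(G,\cdot)\to(G,\circ)$ is exactly an autotopism of $(G,\cdot)$ of the form $(R_{g}^{-1}\theta,\,L_{f}^{-1}\theta,\,\theta)$; and the two nuclear autotopisms $(\mathrm{id},R_{f^{\alpha}},R_{f^{\alpha}})$ and $(R_{f},L_{f}^{-1},\mathrm{id})$ are both valid, the latter because $f^{\alpha}\in N_{\rho}$ forces $f\in N_{\mu}$ by Theorem \ref{GBLN}. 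But the proof is not finished: the entire content of the theorem sits in the step you defer with ``my plan is'' and ``I expect,'' namely making the first component $R_{g}^{-1}\theta$ come out equal to $R_{f}$.

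Moreover, that step cannot be closed in the generality you have set up, and the obstruction is not the one you anticipate. With $\theta=R_{f^{\alpha}}$ the matching condition reads $x f^{\alpha}=(xf)\cdot g$ for all $x$; evaluating at $x=e$ gives $fg=f^{\alpha}$, and conversely, once $fg=f^{\alpha}$ holds, $f\in N_{\mu}$ alone gives $(xf)g=x(fg)=xf^{\alpha}$, so the generalized Bol identity is not what is missing --- what is missing is any hypothesis tying $g$ to $f$. (Your ``parallel'' candidate $\theta=L_{f}R_{f^{\alpha}}$ instead forces $g=f^{\alpha}$, since $\theta(e)=ff^{\alpha}$ must equal $fg$; the two routes are therefore not interchangeable.) The statement as transcribed introduces $g$ with no constraint, and for arbitrary $g$ the claim would say that every $f,g$-isotope with $f^{\alpha}\in N_{\rho}$ is isomorphic to $G$ --- a G-loop-type assertion that (generalized) right Bol loops do not satisfy in general. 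So the missing idea is to notice that evaluating at the identity pins down $g$ (presumably $fg=f^{\alpha}$, a constraint lost in the paper's transcription of \cite{ALA}), impose it explicitly, and then your autotopism computation closes in two lines; without resolving the role of $g$, the argument does not constitute a proof.
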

\begin{thm}[\cite{ALA}]
If $(G,\cdot)$ is a generalized Bol loop with $f\in G$, and let $u\circ v=uR(f)\cdot vL(f)^{-1}$ for all $u,v\in (G,\circ)$. If $f\in N_{\mu}$ of $(G,\cdot)$, then $(G,\cdot)$ and $(G,\circ)$ are isomorphic.
\end{thm}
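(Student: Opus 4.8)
The plan is to show that the hypothesis $f\in N_{\mu}$ is strong enough to force the isotopic product $\circ$ to coincide with the original product $\cdot$, so that the identity map itself is the desired isomorphism. First I would unwind the notation: since $R(f)=R_{f}$ is right translation, $uR(f)=u\cdot f=uf$, and since $L(f)^{-1}=L_{f}^{-1}$ is the functional inverse of left translation by $f$, the element $vL(f)^{-1}$ is the unique $w$ with $f\cdot w=v$, that is $vL(f)^{-1}=f\backslash v$. Thus the defining relation becomes $u\circ v=(uf)\cdot(f\backslash v)$, which is the form I would actually work with.

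The central step is then a short computation. Writing $z=f\backslash v$, so that $fz=v$, I would rewrite $u\circ v=(uf)\cdot z$ and invoke the middle-nucleus identity: because $f\in N_{\mu}$ we have $(xf)\cdot z=x\cdot(fz)$ for all $x,z\in G$, and taking $x=u$ gives $(uf)\cdot z=u\cdot(fz)=u\cdot v$. Hence $u\circ v=u\cdot v$ for all $u,v\in G$. This is the crux, and everything else is a consequence.

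Before concluding I would verify that $(G,\circ)$ is genuinely a loop. The maps $u\mapsto uf$ and $v\mapsto f\backslash v$ are bijections of $G$, so both families of translations of $\circ$ are composites of bijections and $(G,\circ)$ is at least a quasigroup; the existence of a two-sided identity is precisely the point at which $f\in N_{\mu}$ is needed, and it follows immediately once the operations are shown to be equal, since the identity $e$ of $(G,\cdot)$ is then also the identity of $(G,\circ)$. With the two products identified, the identity map $\mathrm{id}\colon(G,\cdot)\to(G,\circ)$ is a bijection satisfying $\mathrm{id}(u\cdot v)=u\cdot v=u\circ v=\mathrm{id}(u)\circ\mathrm{id}(v)$, giving the required isomorphism.

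I expect the only real subtlety to be bookkeeping rather than depth: one must apply the middle-nucleus relation in the correct slot, with the factor $f$ sitting between $u$ and $z$, and one must be careful that $L(f)^{-1}$ denotes the functional inverse $f\backslash(\,\cdot\,)$ and not left translation by an inverse element $f^{-1}$, since in a generalized Bol loop these need not coincide without the left inverse property. Should a nontrivial isomorphism be intended, parallel to the preceding theorem, I would instead test a translation such as $\theta=R_{f}$ or $\theta=L_{f}$ and verify $\theta(xy)=\theta(x)\circ\theta(y)$ directly from $f\in N_{\mu}$; but the computation above shows this detour is unnecessary, as $\circ$ and $\cdot$ already agree.
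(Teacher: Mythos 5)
Your argument is correct: with $uR(f)=uf$ and $vL(f)^{-1}=f\backslash v$, the middle-nucleus identity $(uf)\cdot z=u\cdot(fz)$ applied at $z=f\backslash v$ gives $u\circ v=u\cdot v$, so the identity map is an isomorphism (indeed the two loops are equal, which is stronger than the stated conclusion). The paper itself offers no proof to compare against — the theorem is quoted from the reference [ALA] without argument — but your computation is the natural one, and your cautionary remarks (that $L(f)^{-1}$ is the functional inverse $f\backslash(\,\cdot\,)$ rather than translation by an inverse element, and that the nucleus relation must be applied with $f$ in the middle slot) are exactly the right points to watch.
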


\subsection{Universality Of Loops}
An identity is said to be universal for a loop if it holds in the loop and each of its principal isotope.
Consider $(G,\cdot )$ and $(H,\circ )$ being two distinct
loops. Let $\kappa, \beta$ and $\gamma$ be three bijective mappings, that map $G$ onto $H$. The triple $\theta =(\kappa, \beta, \gamma)$ is called an isotopism of $(G,\cdot )$ onto $(H,\circ )$ if and only if
$$x\kappa\circ y\beta=(x\cdot y)\gamma~\forall~x,y\in G.$$
So, $(H,\circ )$ is called a loop isotope of $(G,\cdot )$.\\
Similarly, the triple
$$\theta^{-1}=(\kappa, \beta, \gamma)^{-1}=(\kappa^{-1},\beta^{-1},\gamma^{-1})$$
is an isotopism from $(H,\circ )$ onto $(G,\cdot )$ so that $(G,\cdot )$ is also called a loop isotope of $(H,\circ)$. Hence, both are said to be isotopic to each other. If one of two isotopic groupoids is a quasigroup, then both are quasigroups, but the same statement is not true if two quasigroups are isotopic and one is a loop. This fact makes it possible and
reasonable to study and consider quasigroups as isotopes of groups.

An isotopism $(\kappa, \beta, \gamma)$ of $(G,\cdot)$ onto $(H,\circ)$ is a principal isotopism if and only if $G=H$ and $\gamma$ is an identity map. Hence,
\begin{defn}
Let $\kappa$ and $\beta$ be permutation of $G$ and let $i$ denote the identity map on $G$. Then $(\kappa, \beta, i)$ is a principal isotopism of a groupoid $(G,\cdot)$ onto a groupoid $(G,\circ)$ means that $(\kappa, \beta, i)$ is an isotopism of $(G,\cdot)$ onto $(G,\circ)$.
\end{defn}
A groupoid $(H,\circ)$ is a principle isotope of a groupoid $(G,\cdot)$ provided $G=H$ and there exist permutations $\kappa$ and $\beta$ of $G$ so that $x\kappa\circ y\beta=x\cdot y$ for all $x,y\in G$.

Now let $(G,\cdot)$ be any quasigroup and let $f$ and $g$ be any element in $G$ (not necessarily distinct). Since $(G,\cdot)$ is a quasigroup, the map $L_{f}$ and $R_{g}$ are permutation of the set $G$. Now define $$x\circ y=xR^{-1}_{g}\cdot yL^{-1}_{f}$$ for all $x,y\in G$. Then $(G,\circ)$ is a principal isotope of $(G,\cdot)$. Clearly $(R_{g},L_{f},i)$ is a principal isotopy of $(G,\cdot)$ onto $(G,\circ)$ and so $(G,\circ)$ is a quasigroup. Since $f\cdot g$ is the two sided identity element of $(G,\circ)$, $(G,\circ)$ is a loop.

The importance of principal isotopy lies in the fact that up to isomorphism the principal isotopy of a groupoid $(G,\cdot)$ account for all the isotopes of $(G,\cdot)$. This explains why principal isotope is employed in the proof of universality of a property in quasigroups and loops theory.
\section{The universality of $\alpha$-elasticity property}\label{SEC:3}
From the generalized Moufang identity $(xy\cdot z)y^{\alpha}=x(y\cdot zy^{\alpha})$, on setting $x=1$, we have $$(y\cdot z)\cdot y^{\alpha}=y\cdot (z\cdot y^{\alpha}).$$ This new identity is called $\alpha$-elasticity property of generalized Moufang loop.\\
One of the important concepts in the theory of quasigroups and loops is the universality of a property. This is exemplified in the theory of algebraic nets wherein all loops which coordinate the same net are isotopic between themselves and the identities which follow from the closure conditions of this net are universal for each of these loops \cite{PS1}. Bol and Moufang identities are examples of identities that are universal for loops \cite{P, PS1}. In the following theorem, we shall use the concept of principal isotopy discussed in the previous section to prove that $\alpha$-elasticity is universal for generalized Moufang loops.
\begin{thm}
Let $G$ be a generalized Moufang loop $Q(\cdot,/,\backslash)$ with $\alpha$-elasticity identity, $(y\cdot z)\cdot y^{\alpha}=y\cdot (z\cdot y^{\alpha})$. Then, the identity is universal for the generalized Moufang loop if and only if the following conditions are satisfied
\begin{equation}\label{UNI1}
(yz/x)(b\backslash y^{\alpha}x)= y(b\backslash\left[(bz/x)(b\backslash y^{\alpha}x)\right])
\end{equation}
\begin{equation}\label{UNI2}
(\left[(by/x)(b\backslash zx)\right]/x)y^{\alpha}= (by/x)(b\backslash zy^{\alpha})
\end{equation}
%
\end{thm}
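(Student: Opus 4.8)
The plan is to use the principal isotope machinery set up in the previous section. Since the property must hold in every principal isotope, I would start by fixing arbitrary $f,g \in G$ and forming the principal isotope $(G,\circ)$ defined by $x \circ y = xR_g^{-1} \cdot yL_f^{-1}$, and I would introduce the substitution $b = f$ and work with the translations $R_g$ and $L_f$ so that the division operations $/$ and $\backslash$ encode $R_g^{-1}$ and $L_f^{-1}$. The $\alpha$-elasticity identity in $(G,\circ)$ reads $(y \circ z) \circ y^{\alpha} = y \circ (z \circ y^{\alpha})$, and the first step is simply to expand both sides using the definition of $\circ$, rewriting every occurrence of $R_g^{-1}$ and $L_f^{-1}$ in terms of the division operations of $(G,\cdot)$.

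First I would translate the outer operations: the left-hand side $(y\circ z)\circ y^{\alpha}$ becomes $\bigl[(y\circ z)R_g^{-1}\bigr]\cdot\bigl[y^{\alpha}L_f^{-1}\bigr]$, and I would substitute the inner term $y\circ z = (yR_g^{-1})\cdot(zL_f^{-1})$, thereby producing nested expressions of the form $(yz/g)$ and $(f\backslash y^{\alpha}g)$ once the translations are written as divisions (writing $uR_g^{-1} = u/g$ and $uL_f^{-1} = f\backslash u$, and noting that $y^{\alpha}$ here must be interpreted as the image under the isotope's map). Doing the same expansion for the right-hand side $y\circ(z\circ y^{\alpha})$ and then equating the two expanded forms should, after renaming $g$ appropriately or after a suitable specialization, yield exactly the two displayed conditions~\eqref{UNI1} and~\eqref{UNI2}. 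The two separate equations presumably arise from tracking the roles of $f$ and $g$ independently, so I would carefully keep the left-division parameter $b$ (playing the role of $f$) and the right-division parameter $x$ (playing the role of $g$) distinct throughout.

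For the converse direction, I would assume~\eqref{UNI1} and~\eqref{UNI2} hold in $(G,\cdot)$ and run the computation in reverse: starting from the isotope identity $(y\circ z)\circ y^{\alpha}=y\circ(z\circ y^{\alpha})$, expand both sides into division-operation form, and then invoke~\eqref{UNI1} and~\eqref{UNI2} to rewrite the left side into the right side. Because principal isotopes account for all isotopes up to isomorphism (as noted at the end of Section~\ref{SEC:2}), verifying the identity in an arbitrary principal isotope $(G,\circ)$ is equivalent to universality, so establishing the biconditional at the level of the principal isotope suffices.

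The main obstacle I anticipate is the bookkeeping in the expansion: correctly tracking which arguments are acted on by $R_g^{-1}$ versus $L_f^{-1}$, and ensuring that the nested substitutions reproduce precisely the parenthesization in~\eqref{UNI1} and~\eqref{UNI2}. In particular, the interaction of the self-map $\alpha$ with the isotopy is delicate — one must be careful whether $\alpha$ commutes with the translations or whether $y^{\alpha}$ in the isotope must be reinterpreted — and matching the asymmetric appearance of the two conditions (one grouping $b\backslash\bigl[\cdots\bigr]$ on the outside, the other grouping $\bigl[\cdots\bigr]/x$) will require choosing the right intermediate substitutions. I expect the cleanest route is to verify that each displayed condition corresponds to collapsing one of the two nested expansions, so that~\eqref{UNI1} governs the left-associated side and~\eqref{UNI2} governs the right-associated side of the isotope's $\alpha$-elasticity identity.
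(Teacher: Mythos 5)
Your proposal follows essentially the same route as the paper's proof: form the principal isotope $x\circ y = R_a^{-1}x\cdot L_b^{-1}y$, write the $\alpha$-elasticity identity in the isotope, and translate the translations $R_a^{-1}$, $L_b^{-1}$ into the division operations of $Q(\cdot,/,\backslash)$, with $b$ playing the role of the left-translation parameter and $x$ the right-translation parameter. The one point where your plan diverges from the paper's execution is the origin of the two conditions: they do not arise from ``tracking $f$ and $g$ independently,'' but from the single expanded identity $((y/x\cdot b\backslash z)/x)\cdot (b\backslash y^{\alpha})= y/x\cdot (b\backslash(z/x\cdot b\backslash y^{\alpha}))$ via two different variable renamings that clear the divisions in two different ways, one yielding (\ref{UNI1}) and the other (\ref{UNI2}).
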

\begin{proof}
Let $Q(\cdot,/,\backslash)$ be a generalized Moufang loop with universal $\alpha$-elasticity. Let $Q(\circ)$ denote a principal isotope of $Q(\cdot)$, that is, $x\circ y=R^{-1}_{a}x\cdot L^{-1}_{b}y,$ where $(R^{-1}_{a}, L^{-1}_{b}, I)$ is the principal isotopy, $R^{-1}_{a}$ and $L^{-1}_{b}$ are right and left division by the element $a$, and $b$, respectively.
Thus, $$R^{-1}_{a}(R^{-1}_{a}y\cdot L^{-1}_{b}z)\cdot L^{-1}_{b}y^{\alpha}=R^{-1}_{a}y\cdot L^{-1}_{b}(R^{-1}_{a}z\cdot L^{-1}_{b}y^{\alpha}).$$
On replacing $a$ by $x$, we obtain
$$((y/x\cdot b\backslash z)/x)\cdot (b\backslash y^{\alpha})= y/x\cdot (b\backslash(z/x\cdot b\backslash y^{\alpha}))$$
$$(y/x\cdot b\backslash z)(b\backslash y^{\alpha})= y/x\cdot (b\backslash(z/x\cdot b\backslash y^{\alpha}))x$$
$$(y/x\cdot b\backslash z)(b\backslash y^{\alpha})x= y\cdot (b\backslash(z/x\cdot b\backslash y^{\alpha}))x$$
\begin{equation*}
(yz/x)(b\backslash y^{\alpha}x)= y(b\backslash\left[(bz/x)(b\backslash y^{\alpha}x)\right]),
\end{equation*}
which gives the equation (\ref{UNI1}).\\
and 
$$((y/x\cdot b\backslash z)/x)\cdot (b\backslash y^{\alpha})= y/x\cdot (b\backslash(z/x\cdot b\backslash y^{\alpha}))$$
$$(((by/x)\cdot b\backslash z)/x)y^{\alpha}= (by/x)(z/x\cdot b\backslash y^{\alpha})$$
\begin{equation*}
(\left[(by/x)(b\backslash zx)\right]/x)y^{\alpha}= (by/x)(b\backslash zy^{\alpha}),
\end{equation*}
which gives the equation (\ref{UNI2}).\\
Therefore, the notion of $\alpha$-elasticity is universal for generalized Moufang loops if and only if the identities (\ref{UNI1}) and (\ref{UNI2}) hold in the primitive generalized Moufang loop $Q(\cdot,/,\backslash)$
\end{proof}
\begin{thm}\label{thm3.2}
If $Q(\cdot)$ is a generalized Moufang loop with universal $\alpha$-elasticity then
\begin{enumerate}
\item[i.] $y^{\alpha}\in N_{\rho}$ if and only if $y\in N_{\lambda}$
\item[ii.] if $y$ is a generalized Moufang element of the loop $Q(\cdot)$, then $y\in N_{\mu}$ if and only if $y^{\alpha}\in N_{\rho}$. 
\end{enumerate}
\end{thm}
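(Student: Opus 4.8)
The plan is to prove both equivalences by exploiting the universal identities (\ref{UNI1}) and (\ref{UNI2}) together with the structural result already available, namely Theorem \ref{GBLN}, which asserts for any generalized Bol loop that $y^{\alpha}\in N_{\rho}$ if and only if $y\in N_{\mu}$. Since a generalized Moufang loop is in particular a generalized Bol loop (the elasticity is the extra ingredient), Theorem \ref{GBLN} is available here. Thus for part (ii) the content is that, \emph{for a generalized Moufang element}, membership in $N_{\mu}$ is equivalent to membership in $N_{\lambda}$ via the condition $y^{\alpha}\in N_{\rho}$; and for part (i) I must produce the genuinely new equivalence $y^{\alpha}\in N_{\rho}\iff y\in N_{\lambda}$, which is where the universality hypothesis does real work.

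For part (i), the strategy is to extract the meaning of the universal identities when a nucleus condition is imposed. First I would assume $y^{\alpha}\in N_{\rho}$ and show $y\in N_{\lambda}$. The key observation is that the universal identities (\ref{UNI1}) and (\ref{UNI2}) encode associativity relations in the principal isotope; by Theorem \ref{GBLN}, $y^{\alpha}\in N_{\rho}$ already gives $y\in N_{\mu}$, so the task reduces to upgrading middle-nucleus membership to left-nucleus membership. Concretely, I would substitute the nucleus condition $y^{\alpha}\in N_{\rho}$ (equivalently $b\backslash y^{\alpha}x$ can be rearranged associatively with the factor $y^{\alpha}$) into (\ref{UNI2}): the right-hand side $(by/x)(b\backslash zy^{\alpha})$ and the bracketed left-hand side simplify because the trailing $y^{\alpha}$ may be pulled across the products, collapsing (\ref{UNI2}) to the assertion $y(uv)=(yu)v$ for the relevant $u,v$, which is precisely the defining relation of $N_{\lambda}$. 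The converse, assuming $y\in N_{\lambda}$ and deriving $y^{\alpha}\in N_{\rho}$, I would run symmetrically off identity (\ref{UNI1}), reading the leading factor $y$ out of the left argument of each product.

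For part (ii), I would first invoke Theorem \ref{GBLN} directly to get the implication in both directions between $y^{\alpha}\in N_{\rho}$ and $y\in N_{\mu}$ for arbitrary generalized Bol loops; the hypothesis that $y$ is a generalized Moufang element is used to guarantee that the generalized Moufang identity $(xy\cdot z)y^{\alpha}=x(y\cdot zy^{\alpha})$ is actually available for that $y$ when rearranging the products. I would then combine this with part (i), obtaining the chain $y\in N_{\mu}\iff y^{\alpha}\in N_{\rho}\iff y\in N_{\lambda}$, which also exhibits the coincidence of nuclei that the introduction advertises.

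The main obstacle I anticipate is the algebraic bookkeeping in part (i): the identities (\ref{UNI1}) and (\ref{UNI2}) are written with the division operations $/$ and $\backslash$, and to convert a nucleus membership into a usable cancellation I must carefully track how a factor such as $y^{\alpha}$ or $y$ commutes past the bracketed subproducts without tacitly assuming an associativity I am trying to prove. The delicate point is to apply the nucleus hypothesis only to the specific associator it governs (right nucleus lets me reassociate a product whose \emph{last} factor is $y^{\alpha}$, not an arbitrary one), so that the reduction of (\ref{UNI2}) to the $N_{\lambda}$ defining equation is logically clean rather than circular.
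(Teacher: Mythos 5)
Your overall strategy for part (i) is essentially the paper's: specialize the universal identities, use the nucleus hypothesis to cancel the relevant factor, and read off the defining relation of the other nucleus. The paper sets $b=e$ in (\ref{UNI2}) to get $([(y/x)\cdot zx]/x)y^{\alpha}=(y/x)\cdot zy^{\alpha}$, pulls the trailing $y^{\alpha}$ out via $y^{\alpha}\in N_{\rho}$, cancels it, and recovers $yz\cdot x=y\cdot zx$; the converse runs off (\ref{UNI1}) with $x=e$ exactly as you propose. Your detour through Theorem~\ref{GBLN} to first land in $N_{\mu}$ is unnecessary for this direction (the paper goes straight from $N_{\rho}$ to $N_{\lambda}$), but it is harmless. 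Where you genuinely diverge is part (ii): the paper does not invoke Theorem~\ref{GBLN} at all, but instead computes directly with the generalized Moufang element identity, chaining $(xy\cdot z)y^{\alpha}=(x\cdot yz)y^{\alpha}$ and $x(y\cdot zy^{\alpha})=x(yz\cdot y^{\alpha})$ to get $(x\cdot yz)y^{\alpha}=x(yz\cdot y^{\alpha})$ and hence $y^{\alpha}\in N_{\rho}$ — and it only writes out the direction $y\in N_{\mu}\Rightarrow y^{\alpha}\in N_{\rho}$. Your appeal to Theorem~\ref{GBLN} is legitimate (the generalized Moufang identity yields $\alpha$-elasticity at $x=1$ and hence the generalized Bol identity, so the cited theorem applies) and actually buys you both implications at once, which is cleaner than the paper's one-sided computation. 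One caution for the converse half of part (i): after cancelling $y$ from (\ref{UNI1}) you are left with $b\cdot(z\cdot(b\backslash y^{\alpha}))=(bz)\cdot(b\backslash y^{\alpha})$, in which the element $b\backslash y^{\alpha}$, not $y^{\alpha}$ itself, occupies the right-nucleus slot and moreover varies with $b$; passing from this to $b\cdot zy^{\alpha}=bz\cdot y^{\alpha}$ is exactly the ``bookkeeping'' you flag as delicate, and it is the one step you should work out explicitly rather than assert, since the paper itself glosses over it.
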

\begin{proof}
i. From (\ref{UNI2}), let $b=e$ where $e$ is the unit of $Q(\cdot)$, $y$ and $z$ be replaced by $by$ and $zx$ respectively. Then $$(y/x)\cdot zy^{\alpha}=(\left[(y/x)\cdot zx\right]/x)y^{\alpha}$$
If $y^{\alpha}\in N_{\rho}$ in the above equation, then
$$(y/x)\cdot z=\left[(y/x)\cdot zx\right]/x$$ or $$yz\cdot x=y\cdot zx$$
which implies that $y\in N_{\lambda}$.\\
Conversely, taking $x=e$ in (\ref{UNI1}), we have 
$$(yz)\cdot (b\backslash y^{\alpha})= y(b\backslash\left[bz\cdot (b\backslash y^{\alpha})\right]).$$ If $y\in N_{\lambda}$, then 
$$z\cdot (b\backslash y^{\alpha})= b\backslash\left[bz\cdot (b\backslash y^{\alpha})\right]$$ Or 
$$b\cdot zy^{\alpha}=bz\cdot y^{\alpha}$$
which implies that $y^{\alpha}\in N_{\rho}$.\\

ii. An element $y$ of the loop $Q(\cdot)$ is called a generalized Moufang element if the equality $(xy\cdot z)y^{\alpha}=x(y\cdot zy^{\alpha})$ holds $\forall \; x,z\in Q$.\\
Suppose $y$ is a generalized Bol element of the loop $Q(\cdot)$ and $y\in N_{\mu}$, middle nucleus of $Q(\cdot)$,
$$N_{\mu}=\left\{y\in Q: xy\cdot z=x\cdot yz \; \forall x,z\in Q\right\}.$$
Then, $$(xy\cdot z)y^{\alpha}=(x\cdot yz)y^{\alpha}=x(y\cdot zy^{\alpha})=x(yz\cdot y^{\alpha}).$$
So, $$(x\cdot yz)y^{\alpha}=x(yz\cdot y^{\alpha}).$$
On replacing $yz$ by $z$, we obtain $$xz\cdot y^{\alpha}=x\cdot zy^{\alpha},$$
which implies that $y^{\alpha}\in N_{\rho}$.
\end{proof}
\begin{thm}\label{thm3.3}
Let $Q(\cdot)$ be a generalized Moufang loop with universal $\alpha$-elasticity such that $y^{\alpha}b\cdot b^{\alpha}=(yz)^{\alpha}\cdot z$ and $\alpha$ is a homomorphic self map. Then, $Q(\cdot)$ is a generalized Moufang loop if and only if it a middle generalized Bol loop.
\end{thm}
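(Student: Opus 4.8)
The plan is to establish the equivalence by proving each implication as a chain of substitutions carried out inside the primitive loop $Q(\cdot,/,\backslash)$, using three ingredients that the hypotheses supply: the $\alpha$-elasticity law $(y\cdot z)\cdot y^{\alpha}=y\cdot(z\cdot y^{\alpha})$, which is available for free because it is just the $x=1$ instance of the defining identity; the homomorphic expansion $(uv)^{\alpha}=u^{\alpha}v^{\alpha}$; and the collapsing relation $y^{\alpha}b\cdot b^{\alpha}=(yz)^{\alpha}\cdot z$.

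Before starting either direction, I would pin down the defining identity of a middle generalized Bol loop (modeled on the classical middle Bol identity of \cite{PS1}) and rewrite it in primitive division notation, so that it and the generalized Moufang identity $(xy\cdot z)y^{\alpha}=x(y\cdot zy^{\alpha})$ are expressed in a common symbolic language. This is the bookkeeping step that makes the two identities directly comparable.

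For the direction \emph{generalized Moufang implies middle generalized Bol}, I would begin from $(xy\cdot z)y^{\alpha}=x(y\cdot zy^{\alpha})$ and use $\alpha$-elasticity to replace the inner factor $y\cdot zy^{\alpha}$ by $(y\cdot z)y^{\alpha}$, reducing the Moufang identity to a statement about how the single element $y^{\alpha}$ associates across the product. I would then introduce the divisions $/$ and $\backslash$ to isolate the bracketed subterms, substitute the homomorphic expansion of $(yz)^{\alpha}$, and finally apply $y^{\alpha}b\cdot b^{\alpha}=(yz)^{\alpha}z$ to absorb the surplus $\alpha$-factor into the single $\alpha$-image that the middle Bol pattern requires. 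The converse reverses this chain: starting from the middle generalized Bol identity I would use the collapsing relation together with homomorphy to re-split the $\alpha$-images, then invoke $\alpha$-elasticity to restore the grouping $x(y\cdot zy^{\alpha})$, thereby recovering the Moufang identity.

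The hard part will be sequencing the three tools correctly, since each performs exactly one task: $\alpha$-elasticity only transports $y^{\alpha}$ across a single product, homomorphy only splits $\alpha$ over a product, and the relation $y^{\alpha}b\cdot b^{\alpha}=(yz)^{\alpha}z$ is the sole device that removes a spurious $\alpha$-factor, so almost any permutation of the steps leaves an uncancelled term. I would pay particular attention to the collapsing relation itself, whose two sides carry the distinct free variables $b$ and $z$; to apply it one must specialize these consistently --- matching $b$ to the left-division element appearing in the principal-isotope computation that underlies the universality conditions (\ref{UNI1})--(\ref{UNI2}) --- and it is this specialization, rather than any single algebraic move, where the argument is most likely to stall.
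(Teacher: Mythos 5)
There is a genuine gap: your derivation starts from the wrong identity. The middle generalized Bol identity that the theorem targets, $(b/x)(z^{\alpha}\backslash b^{\alpha})=b(z^{\alpha}x\backslash b^{\alpha})$, contains a parameter $b$ that in the paper's proof is precisely the isotopy parameter of the principal isotope; it enters only through the universality condition (\ref{UNI1}), never through the generalized Moufang identity or the in-loop $\alpha$-elasticity law. Your plan begins from $(xy\cdot z)y^{\alpha}=x(y\cdot zy^{\alpha})$ and applies $\alpha$-elasticity to the inner factor, but that only recovers the generalized right Bol identity $(xy\cdot z)y^{\alpha}=x(yz\cdot y^{\alpha})$ --- i.e., the observation already made in the introduction --- and no amount of rewriting with divisions can manufacture the missing $b$. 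The whole point of the universality hypothesis (and of Kinyon's question) is that elasticity holding in the loop itself is strictly weaker than elasticity holding in every isotope; a proof that never actually computes with (\ref{UNI1}) or (\ref{UNI2}) cannot reach a middle-Bol-type conclusion. You gesture at this at the very end (``matching $b$ to the left-division element appearing in the principal-isotope computation''), but in your plan the universality conditions are an afterthought rather than the starting point.

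Concretely, the paper's proof takes (\ref{UNI1}) and performs the substitutions $z\mapsto y\backslash z$, then $z\mapsto zx$, then $y\mapsto zx$, arriving at $(b/x)(b\backslash (zx)^{\alpha}x)=b((zx)\backslash[z(b\backslash (zx)^{\alpha}x)])$; only then does homomorphy split $(zx)^{\alpha}$ into $z^{\alpha}x^{\alpha}$, and the collapsing relation, in the form $z^{\alpha}x^{\alpha}\cdot x=z^{\alpha}b\cdot b^{\alpha}$, is used to identify $b\backslash(z^{\alpha}x^{\alpha}\cdot x)$ with $z^{\alpha}\backslash b^{\alpha}$, which is what reduces both sides to the middle generalized Bol form. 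The substitution $y\mapsto zx$ --- the move that creates the composite $\alpha$-image $(zx)^{\alpha}$ on which homomorphy and the collapsing relation can act --- is the pivotal step, and it is absent from your outline. Without it, the ``surplus $\alpha$-factor'' you propose to absorb never appears in a form the collapsing relation can reach, so the chain of substitutions you describe stalls before the target identity is in sight.
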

\begin{proof}
Let $z$ be replaced by $y\backslash z$ in (\ref{UNI1}).\\
$$y((y\backslash z)/x)(b\backslash y^{\alpha}x)=y(b\backslash\left[b((y\backslash z)/x)(b\backslash y^{\alpha}x)\right])$$
$$\left[b(y\backslash z)/x\right](b\backslash y^{\alpha}x)=b(y\backslash\left[(z/x)(b\backslash y^{\alpha}x)\right]).$$
Now replace $z$ by $zx$ above
$$\left[b(y\backslash zx)/x\right](b\backslash y^{\alpha}x)=b(y\backslash\left[z(b\backslash y^{\alpha}x)\right]),$$
or if $y$ is replaced by $zx$,
\begin{equation}\label{MGBL}
(b/x)(b\backslash (zx)^{\alpha}x)=b((zx)\backslash \left[z(b\backslash (zx)^{\alpha}x)\right])
\end{equation}
\begin{equation}
(b/x)(b\backslash z^{\alpha}x^{\alpha}\cdot x)=b(zx\backslash \left[z(b\backslash z^{\alpha}x^{\alpha}\cdot x)\right])
\end{equation}
Now let $z^{\alpha}x^{\alpha}\cdot x=z^{\alpha}b\cdot b^{\alpha}$ $\forall \; z,b,x\in Q$,
\begin{align*}
z^{\alpha}b\cdot b^{\alpha}&=z^{\alpha}x^{\alpha}\cdot x&\\
z^{\alpha}b&=(z^{\alpha}x^{\alpha}\cdot x)/ b^{\alpha} &\\
b&=(z^{\alpha}x^{\alpha}\cdot x)/(z^{\alpha}\backslash b^{\alpha})
\end{align*}
put the last equality in (\ref{MGBL}), we have
$$(b/x)(z^{\alpha}\backslash b^{\alpha})=b(z^{\alpha}x\backslash b^{\alpha}).$$
The converse of the theorem follows by reversing the steps above.
\end{proof}
\begin{remark}\label{rem3.1}
It must be noted that if the condition $y^{\alpha}b\cdot b^{\alpha}=(yz)^{\alpha}\cdot z$ in Theorem (\ref{thm3.3}) had been relaxed the middle generalized Bol identity would not have been realized. 
In the absence of this condition, generalized Moufang loops with universal $\alpha$-elasticity property provide us with example of universally $\alpha$-elastic (flexible) law that is not middle generalized Bol. Thus, we have been able to, partly, answer the question (Is there a finite, universally flexible loop that is not middle Bol?) posed by Michael Kinyon at LOOPS '03, Prague 2003, in the generalized Bol context. Consequently, we suggest that the search for a finite, universally flexible loop that is not middle Bol should be intensified within loops with appropriate universal law of elasticity. If $\alpha$ is an identity map in Theorem \ref{thm3.3}, then $y^{\alpha}b\cdot b^{\alpha}=(yz)^{\alpha}\cdot z$ becomes $b\cdot b=z\cdot z$ for all $b,z$ which is just exponent $2$.   
\end{remark}
\begin{defn}
A loop $Q(\cdot)$ is called middle generalized Bol loop if it satisfies the identity \begin{equation}\label{MIGBL}(x/y)(z^{\alpha}\backslash x^{\alpha})=x(z^{\alpha}y\backslash x^{\alpha}).\end{equation}
\end{defn}
If the homomorphic self map $\alpha$ is an identity map, the middle generalized Bol identity (\ref{MIGBL}) reduces to middle Bol identity. 
\begin{corollary}
A Moufang loop is of exponent $2$ if and only if it is a middle Bol loop.
\end{corollary}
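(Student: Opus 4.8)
The plan is to obtain this corollary as the specialization of Theorem \ref{thm3.3} to the case where the self map $\alpha$ is the identity map $\mathrm{id}_Q$. First I would record what each hypothesis and conclusion of Theorem \ref{thm3.3} becomes under $\alpha=\mathrm{id}_Q$. The defining identity $(xy\cdot z)y^{\alpha}=x(y\cdot zy^{\alpha})$ collapses to the Moufang identity $(xy\cdot z)y=x(y\cdot zy)$, so a ``generalized Moufang loop'' becomes simply a Moufang loop; the $\alpha$-elasticity identity $(yz)y^{\alpha}=y(zy^{\alpha})$ becomes ordinary elasticity $(yz)y=y(zy)$; and the middle generalized Bol identity (\ref{MIGBL}) becomes the middle Bol identity $(x/y)(z\backslash x)=x(zy\backslash x)$. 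Since $\mathrm{id}_Q$ is trivially a homomorphism, the only two hypotheses of Theorem \ref{thm3.3} that need separate attention are the universality of elasticity and the side condition $y^{\alpha}b\cdot b^{\alpha}=(yz)^{\alpha}\cdot z$.

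For the first, I would invoke the fact, already used throughout this section, that the Moufang identities are universal for loops, so that flexibility $(yz)y=y(zy)$ holds not only in $Q$ but in every principal isotope of $Q$; hence every Moufang loop automatically carries universal elasticity, and this hypothesis of Theorem \ref{thm3.3} is free. For the side condition I would follow Remark \ref{rem3.1}: with $\alpha=\mathrm{id}_Q$ the equation $y^{\alpha}b\cdot b^{\alpha}=(yz)^{\alpha}\cdot z$ reads $yb\cdot b=yz\cdot z$ for all $y,b,z\in Q$. Applying the right alternative law $ub\cdot b=u\cdot bb$, valid in every Moufang loop, to both sides and cancelling the common left factor $y$ yields $b\cdot b=z\cdot z$ for all $b,z$; taking $b=e$ then gives $z\cdot z=e$ for every $z$, that is, $Q$ has exponent $2$, and each of these steps reverses. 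Thus, for a Moufang loop, the side condition of Theorem \ref{thm3.3} holds precisely when $Q$ is of exponent $2$.

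With these identifications in hand I would simply read off Theorem \ref{thm3.3}. A Moufang loop of exponent $2$ satisfies all four hypotheses of that theorem (Moufang, universally elastic, side condition, $\alpha$ homomorphic), so its conclusion gives the middle generalized Bol identity, which under $\alpha=\mathrm{id}_Q$ is exactly the middle Bol identity; hence $Q$ is a middle Bol loop. Conversely, a middle Bol Moufang loop, by the reverse implication of Theorem \ref{thm3.3}, forces the side condition, which by the previous paragraph is exactly exponent $2$. Combining the two directions establishes the stated equivalence.

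The step I expect to be the main obstacle is the careful verification in the second paragraph that the side condition is genuinely \emph{equivalent} to exponent $2$ rather than merely implied by it: the reduction from $yb\cdot b=yz\cdot z$ to $b\cdot b=z\cdot z$ relies on cancellation together with the right alternative law, and one must check that nothing beyond the Moufang structure is used, so that the reversal of steps promised in the proof of Theorem \ref{thm3.3} remains valid in the degenerate case $\alpha=\mathrm{id}_Q$.
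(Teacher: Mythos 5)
Your forward direction is a legitimate alternative route: the paper proves the corollary by a direct computation from the Moufang identity $xy\cdot zx=x(yz\cdot x)$, rewriting $x/y=xy^{-1}$, $z\backslash x=z^{-1}x$ and using $(zy)^{-1}=y^{-1}z^{-1}$, whereas you specialize Theorem \ref{thm3.3} together with Remark \ref{rem3.1} to $\alpha=\mathrm{id}_Q$. For that half your bookkeeping is sound: universal elasticity is free for Moufang loops since every isotope of a Moufang loop is Moufang, and the side condition $yb\cdot b=yz\cdot z$ reduces to exponent $2$ via right alternativity exactly as you say.

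The converse, however, has a genuine gap. Theorem \ref{thm3.3} carries the side condition $y^{\alpha}b\cdot b^{\alpha}=(yz)^{\alpha}\cdot z$ as a standing \emph{hypothesis}; its ``if and only if'' relates ``generalized Moufang'' to ``middle generalized Bol'' \emph{under} that hypothesis. It nowhere asserts that a middle (generalized) Bol loop must satisfy the side condition, so your sentence ``a middle Bol Moufang loop, by the reverse implication of Theorem \ref{thm3.3}, forces the side condition'' cannot be read off from the theorem: you are invoking the converse of a conditional whose antecedent already contains the very condition you are trying to derive. Nor can this step be repaired by a different argument: in any group, $(x/y)(z\backslash x)=xy^{-1}\cdot z^{-1}x=x\cdot(zy)^{-1}x=x((zy)\backslash x)$, so every group --- for instance $\mathbb{Z}_3$ --- is a Moufang loop satisfying the middle Bol identity without having exponent $2$. (The same computation, which uses only the antiautomorphic inverse property, shows that every Moufang loop is middle Bol, so the ``only if'' half of the corollary is itself suspect; the paper's own converse quietly inserts $y^{-1}=y$ at the final step. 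But within the terms of your proposal, the concrete defect is the unjustified appeal to the reverse implication of Theorem \ref{thm3.3}.)
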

\begin{proof}
	Let $Q$ be a Moufang loop of exponent $2$. From the Moufang identity, $xy\cdot zx=x(yz\cdot x)$, we obtain $xy^{-1}\cdot z^{-1}x=x((z^{-1}y^{-1})^{-1}x)$ which implies $(x/y)(z\backslash x)=x((z^{-1}y^{-1})\backslash x)$
	since $Q$ is of exponent $2$, $y^{-1}=y$ for all $y\in Q$ and thus, $(x/y)(z\backslash x)=x((zy)\backslash x)$
	which gives the middle Bol identity.\\
	Conversely, let $Q$ be a middle Bol loop. Then,
	$(x/y)(z\backslash x)=x((zy)\backslash x)\implies$
	$xy^{-1}\cdot z^{-1}x=x((zy)^{-1}\cdot x)\implies$
	$xy^{-1}\cdot z^{-1}x=x(y^{-1}z^{-1}\cdot x)$.
	The last equation generalizes Moufang identity and on setting $y^{-1}=y$ for all $y\in Q$ we have $$xy\cdot zx=x(yz\cdot x)$$
	and thus, Moufang loop of exponent $2$.
\end{proof}	
\begin{remark}
The above corollary shows that search for a finite universally flexible loop (in Kinyon's parlance) that is not middle Bol should be intensified within Moufang loops of exponent different from $2$.	
\end{remark}
\section{The $\alpha$-alternative laws}\label{SEC:4}
In the generalized Moufang identity, $(xy\cdot z)y^{\alpha}=x(y\cdot zy^{\alpha})$ and the half-Moufang identity,  $(y^{\alpha}z\cdot y)x=y^{\alpha}(z\cdot yx)$, if we let $z=1$, we have $(xy)\cdot y^{\alpha}=x\cdot (yy^{\alpha})$ and  $(y^{\alpha}y)\cdot x=y^{\alpha}\cdot (yx)$ which give the right $\alpha$-alternative and left $\alpha$-alternative law, respectively. These new identities are called $\alpha$-alternative laws of generalized Moufang loop.

\begin{thm}\label{thm3.4}
A loop with universal $\alpha$-elasticity satisfies the left $\alpha$-alternative law $y^{\alpha}\cdot yz=y^{\alpha}y\cdot z$
if and only if it has the right inverse property.
\end{thm}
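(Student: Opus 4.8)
The plan is to prove the two implications separately, working within the class of generalized Moufang loops so that the defining identity $(xy\cdot z)y^{\alpha}=x(y\cdot zy^{\alpha})$, its elasticity $x(yx)=(xy)x$, and the $\alpha$-elasticity $(yz)y^{\alpha}=y(zy^{\alpha})$ are all available, together with the universality conditions (\ref{UNI1}) and (\ref{UNI2}) supplied by the hypothesis. In translation language the two elasticities read $R_{y^{\alpha}}L_{y^{\alpha}}=L_{y^{\alpha}}R_{y^{\alpha}}$ and $R_{y^{\alpha}}L_{y}=L_{y}R_{y^{\alpha}}$, so that $R_{y^{\alpha}}$ commutes with the product $L_{y^{\alpha}}L_{y}$; this observation drives the sufficiency argument.

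For the direction ``RIP implies the left $\alpha$-alternative law'' I would start from the generalized Moufang identity with $x$ replaced by $y^{\alpha}$, namely $(y^{\alpha}y\cdot z)y^{\alpha}=y^{\alpha}(y\cdot zy^{\alpha})$, and right-multiply both sides by $(y^{\alpha})^{\rho}$. The right inverse property collapses the left-hand side to $(y^{\alpha}y)z$, leaving $(y^{\alpha}y)z=\big(y^{\alpha}(y\cdot zy^{\alpha})\big)(y^{\alpha})^{\rho}$. Rewriting the right-hand side as $R_{(y^{\alpha})^{\rho}}L_{y^{\alpha}}L_{y}R_{y^{\alpha}}(z)$ and using RIP in the form $R_{(y^{\alpha})^{\rho}}=R_{y^{\alpha}}^{-1}$, the factor $R_{y^{\alpha}}$ conjugates $L_{y^{\alpha}}L_{y}$ trivially because the two commute, so the right-hand side reduces to $L_{y^{\alpha}}L_{y}(z)=y^{\alpha}(yz)$. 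This yields $(y^{\alpha}y)z=y^{\alpha}(yz)$, the left $\alpha$-alternative law; this half is short and uses universality only through the ambient elasticities.

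For the converse I would first specialise the left $\alpha$-alternative law by taking $z=y\backslash e$, so that $yz=e$ and the law gives $(y^{\alpha}y)(y\backslash e)=y^{\alpha}$, that is, $(uv)v^{\rho}=u$ in the single instance $u=y^{\alpha}$, $v=y$. This is exactly the right inverse property restricted to left factors lying in the image of $\alpha$. The remaining task is to upgrade this instance to the identity $(ty)(y\backslash e)=t$ for every $t$, and here I would feed the instance back into the universality conditions (\ref{UNI1})/(\ref{UNI2}) with both isotopy parameters $b$ and $x$ kept general, using the elasticities to cancel the $\alpha$-images that appear and thereby free the left factor.

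I expect this last upgrade to be the main obstacle. The naive specialisations of (\ref{UNI1}) and (\ref{UNI2}), obtained by setting $b=e$ or $x=e$, collapse either to triviality or back to plain $\alpha$-elasticity, so the genuine two-parameter form must be used; moreover $\alpha$ is only a self-map, applied to the single distinguished variable, which makes substitutions that alter that variable awkward and blocks a direct appeal to surjectivity of $\alpha$. The delicate point is therefore to choose the substitution in (\ref{UNI1})/(\ref{UNI2}) that converts the established instance $(y^{\alpha}y)(y\backslash e)=y^{\alpha}$ into the full right inverse property without tacitly assuming $\alpha$ surjective, after which reversing the computation of the first half closes the equivalence.
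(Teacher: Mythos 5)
Your sufficiency direction (RIP implies the left $\alpha$-alternative law) is sound and is a genuinely different route from the paper's: writing the generalized Moufang identity at $x=y^{\alpha}$ as $R_{(y^{\alpha})^{\rho}}L_{y^{\alpha}}L_{y}R_{y^{\alpha}}(z)=y^{\alpha}y\cdot z$ and cancelling $R_{y^{\alpha}}$ against $R_{(y^{\alpha})^{\rho}}=R_{y^{\alpha}}^{-1}$ because $R_{y^{\alpha}}$ commutes with $L_{y^{\alpha}}L_{y}$ is clean and short. Be aware, though, that it leans on the generalized Moufang identity and on plain elasticity of $y^{\alpha}$ as ambient hypotheses, whereas the theorem is stated for a loop with universal $\alpha$-elasticity and the paper's proof works only through the universality condition (\ref{UNI1}); your argument therefore establishes this implication for generalized Moufang loops rather than for the class named in the statement.

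The necessity direction is where the proposal has a genuine gap, and you have diagnosed it yourself: from the left $\alpha$-alternative law you extract only the one-parameter instance $(y^{\alpha}y)y^{\rho}=y^{\alpha}$, in which the left factor is tied to the cancelled factor, and the promised upgrade to $(ty)y^{\rho}=t$ for arbitrary $t$ is never carried out. The missing move is exactly the one the paper makes: substitute $z=e$ and $x=y^{-\alpha}$ into (\ref{UNI1}) while keeping the isotopy parameter $b$ completely general, which yields $(yy^{\alpha})b^{-1}=y(b\backslash[(b/y^{-\alpha})b^{-1}])$; applying the alternative law on the left, cancelling $y$ and then unwinding with $\alpha$-elasticity gives $by^{\alpha}\cdot y^{-\alpha}=b$ with $b$ arbitrary, i.e.\ cancellation with a free left factor. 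Your observation that setting $b=e$ or $x=e$ trivializes the universality conditions is what led you away from this: the productive specialization fixes $z$ and $x$ but leaves $b$ free, and that free $b$ is precisely what liberates the left factor. Note finally that even the paper's version only achieves cancellation against elements of the form $y^{-\alpha}$, so the right inverse property is really obtained only on the image of $\alpha$ unless $\alpha$ is surjective; the scruple you raise is legitimate, but the paper passes over it in silence rather than resolving it.
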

\begin{proof}
Let $e$ denote the unit element of the loop $Q(\cdot)$ and replace $x$ by $y^{-\alpha}$ and $z$ by $e$ in (\ref{UNI1}), then $$(yy^{\alpha})b^{-1}=y(b\backslash\left[(b/y^{-\alpha})b^{-1}\right])$$
If $Q(\cdot)$ satisfies the left $\alpha$-alternative law $$(yy^{\alpha})b^{-1}=y\cdot y^{\alpha}b^{-1}$$
$$y^{\alpha}b^{-1}=(b\backslash\left[(b/y^{-\alpha})b^{-1}\right])$$
$$b/y^{-\alpha}=by^{\alpha}$$
$$by^{\alpha}\cdot y^{-\alpha}=b$$
for every $y,b\in Q$ and $\alpha:Q\longrightarrow Q$.\\
Conversely if $Q(\cdot)$ has the right inverse property then $$b/y^{-\alpha}=by^{\alpha}$$
Using the identity $(yy^{\alpha})b^{-1}=y(b\backslash\left[(b/y^{-\alpha})b^{-1}\right])$, we have $$(yy^{\alpha})b^{-1}=y\left[b\backslash(by^{\alpha}\cdot b^{-1})\right]=y\left[b\backslash(b\cdot y^{\alpha}b^{-1})\right]=y\cdot y^{\alpha}b^{-1}$$
thus, $Q(\cdot)$ satisfies the left $\alpha$-alternative law.
\end{proof}

\begin{thm}\label{thm3.5}
A loop with universal $\alpha$-elasticity satisfies the right $\alpha$-alternative law $(xy)y^{\alpha}=x(yy^{\alpha})$ if and only if it has the left inverse property.
\end{thm}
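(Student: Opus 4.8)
The plan is to mirror the proof of Theorem \ref{thm3.4} under the left--right duality, replacing the left $\alpha$-alternative law by the right one and the universality identity (\ref{UNI1}) by its companion (\ref{UNI2}). The reason (\ref{UNI2}) is the right tool is structural: the right $\alpha$-alternative law $(xy)y^{\alpha}=x(yy^{\alpha})$ re-associates a \emph{trailing} factor $y^{\alpha}$, and it is precisely on the left-hand side of (\ref{UNI2}) that $y^{\alpha}$ occurs as the outermost right multiplier, just as the leading factor $y$ on the right-hand side of (\ref{UNI1}) was the hook for the left $\alpha$-alternative law in Theorem \ref{thm3.4}. Both directions of the equivalence should then be the same computation read forwards and backwards, once the correct specialization of (\ref{UNI2}) is in place.

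For the forward direction I would start from (\ref{UNI2}) and specialize it dually to the choice $x=y^{-\alpha}$, $z=e$ used in Theorem \ref{thm3.4}. Setting $z=e$ collapses the middle factors $b\backslash zx$ and $b\backslash zy^{\alpha}$, and the remaining free parameter is then fixed so as to surface the genuine inverse $y^{-\alpha}$, interpreting $a/c^{-\alpha}$ as $ac^{\alpha}$ exactly as in the proof of Theorem \ref{thm3.4}. This should leave an identity of the shape $(U\cdot y)y^{\alpha}=U\bigl(b\backslash[\,\cdots\,]\bigr)$, to which the hypothesised right $\alpha$-alternative law applies, turning the left-hand side into $U(yy^{\alpha})$. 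Left cancellation of the common factor $U$, followed by left multiplication by $b$ and a right cancellation, should collapse the bracket to the relation $y^{-\alpha}\backslash b = y^{\alpha}b$, that is, $y^{-\alpha}(y^{\alpha}b)=b$, which is exactly the left inverse property for the element $y^{\alpha}$. For the converse I would assume the left inverse property, substitute $y^{-\alpha}\backslash b=y^{\alpha}b$ back into the same specialization of (\ref{UNI2}), use it to simplify the left-division term, and run the forward steps in reverse to reproduce $(xy)y^{\alpha}=x(yy^{\alpha})$.

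The main obstacle is choosing that specialization, and here a naive transcription of the proof of Theorem \ref{thm3.4} fails. The substitution $x=y^{-\alpha}$ is the \emph{wrong} one for (\ref{UNI2}): because $y^{\alpha}$ occurs there in isolation rather than paired as $y^{\alpha}x$, putting $x=y^{-\alpha}$ leaves two stray $y^{\alpha}$ factors on the left and does not reduce, while the trivial choices $x=e$ and $x=y^{\alpha}$ collapse (\ref{UNI2}) to a tautology or to the right $\alpha$-alternative law itself. The delicate point is therefore to locate the parameter value -- presumably bound to $b$ rather than to $x$ -- that simultaneously (i) produces the single re-associable block $(Uy)y^{\alpha}$ on which the right $\alpha$-alternative law can act, and (ii) introduces the honest inverse $y^{-\alpha}$ demanded by the left inverse property, all the while using only the formal division-by-inverse manipulations already licensed in the proof of Theorem \ref{thm3.4} and avoiding any appeal to inverse properties not yet established.
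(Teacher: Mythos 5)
Your overall strategy coincides with the paper's: both arguments work from identity (\ref{UNI2}), specialize its parameters so that the right-hand side takes the shape $U\cdot(yy^{\alpha})$, apply the hypothesised right $\alpha$-alternative law to rewrite this as $(Uy)\cdot y^{\alpha}$, cancel the trailing $y^{\alpha}$ and the leading factor to read off the left inverse property, and reverse the computation for the converse. You also correctly diagnose that the naive transcription $x=y^{-\alpha}$ of Theorem \ref{thm3.4} does not work here and that the inverse must be fed in through the slot $b$, with $z$ trivialized. But the proposal stops precisely where the proof actually lives: you never exhibit the specialization, and ``presumably bound to $b$'' is a conjecture, not a step. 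That is a genuine gap --- without the concrete substitution there is no identity to which the right $\alpha$-alternative law can be applied, so neither direction of the equivalence gets off the ground. (Incidentally, your side remark that $x=e$ collapses (\ref{UNI2}) to a tautology is not right; that specialization is the nontrivial identity (\ref{IP}) used in Theorem \ref{thm3.6}.)

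For the record, the paper's choice is $z=e$ together with $b=y^{-1}$, the left inverse of $y$, so that $by/x=e/x=x^{-1}$ while $x$ stays free; (\ref{UNI2}) then becomes $([x^{-1}(y^{-1}\backslash x)]/x)\,y^{\alpha}=x^{-1}(y^{-1}\backslash y^{\alpha})$, whose right side the paper writes as $x^{-1}(yy^{\alpha})$. From there the argument runs exactly as you sketched: the right $\alpha$-alternative law turns this into $(x^{-1}y)y^{\alpha}$, cancelling $y^{\alpha}$, then $x^{-1}$ and the $/x$, collapses the bracket to $y^{-1}\backslash x=yx$, i.e.\ $y^{-1}\cdot yx=x$, and the converse substitutes this relation back in. Had you carried the computation out along these lines you would still have had to confront two points: the identification $y^{-1}\backslash y^{\alpha}=yy^{\alpha}$ needed to put the right side into re-associable form is itself an instance of the left inverse property, so the forward direction as the paper presents it is not obviously free of circularity; and, as in your own sketch (whose conclusion is $y^{-\alpha}(y^{\alpha}b)=b$), one a priori obtains the inverse property only at elements in the range of $\alpha$ unless $\alpha$ is onto.
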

\begin{proof}
Replace $z$ by $bz$ and $x$ by $e$ in (\ref{UNI2}) to obtain
$$x^{-1}(yy^{\alpha})=(\left[x^{-1}(y^{-1}\backslash x)\right]/x)y^{\alpha}$$
But, $$x^{-1}(yy^{\alpha})=x^{-1}y\cdot y^{\alpha}$$
Hence, $$ x^{-1}y=\left[x^{-1}(y^{-1}\backslash x)\right]/x$$
This implies that $$y^{-1}\backslash x=yx$$ and $$x=y^{-1}\cdot yx.$$
Conversely, If $Q(\cdot)$ has the left inverse property then \begin{align*}
x^{-1}(yy^{\alpha})&=(\left[(x^{-1}\cdot yx)\right]/x)y^{\alpha}&\\
&=(\left[(x^{-1}y\cdot x)\right]/x)y^{\alpha}&\\
&=(x^{-1}y)y^{\alpha}
\end{align*}
i.e $Q(\cdot)$ satisfies the right $\alpha$-alternative property.
\end{proof}
\begin{corollary}
A generalized Moufang loop with $\alpha$-universal elasticity is an inverse property loop if and only if it satisfies the left and right $\alpha$-alternative property.
\end{corollary}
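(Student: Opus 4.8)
The plan is to obtain this corollary directly from Theorems \ref{thm3.4} and \ref{thm3.5}, with no fresh computation required. Both of those theorems are stated for an arbitrary loop with universal $\alpha$-elasticity, and a generalized Moufang loop carrying $\alpha$-universal elasticity is in particular such a loop, so both results apply verbatim. The entire content of the corollary is then the bookkeeping observation that an inverse property (IP) loop is, by definition, exactly a loop that is simultaneously LIP and RIP, combined with the two biconditionals already in hand.

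Before assembling the argument I would record the exact correspondences, because the single genuine pitfall here is cross-wiring them. Theorem \ref{thm3.4} asserts that the \emph{right} inverse property is equivalent to the \emph{left} $\alpha$-alternative law, whereas Theorem \ref{thm3.5} asserts that the \emph{left} inverse property is equivalent to the \emph{right} $\alpha$-alternative law. Thus the pairing is crossed: RIP matches the left alternative law and LIP matches the right alternative law. Keeping these correctly matched is the only point at which a careless step could creep in.

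For the forward direction I would suppose $Q(\cdot)$ is an inverse property loop, hence both LIP and RIP. Feeding the RIP hypothesis into Theorem \ref{thm3.4} yields the left $\alpha$-alternative law, and feeding the LIP hypothesis into Theorem \ref{thm3.5} yields the right $\alpha$-alternative law, so $Q(\cdot)$ satisfies both $\alpha$-alternative laws. For the converse I would suppose $Q(\cdot)$ satisfies both $\alpha$-alternative laws: the left $\alpha$-alternative law delivers RIP through Theorem \ref{thm3.4}, and the right $\alpha$-alternative law delivers LIP through Theorem \ref{thm3.5}; being both LIP and RIP, $Q(\cdot)$ is an inverse property loop.

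Since the two biconditionals do all the work, I do not anticipate any obstacle of substance; the statement follows at once by conjoining Theorems \ref{thm3.4} and \ref{thm3.5}, and the only care needed is the crossed matching of the inverse properties with the alternative laws noted above.
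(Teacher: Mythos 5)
Your proposal is correct and matches the paper's own proof, which likewise simply cites Theorems \ref{thm3.4} and \ref{thm3.5} and conjoins the two biconditionals. Your explicit note about the crossed pairing (RIP with the left $\alpha$-alternative law, LIP with the right) is accurate and is the only detail the paper leaves implicit.
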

\begin{proof}
The proof of this corollary follows from Theorem \ref{thm3.4} and \ref{thm3.5}.
\end{proof}
\begin{thm}\label{thm3.6}
A generalized Moufang loop, $Q(\cdot)$, with universal $\alpha$-elasticity property has the left inverse property if and only if it has the right inverse property.
\end{thm}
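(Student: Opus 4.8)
The plan is to route the equivalence through the two $\alpha$-alternative laws rather than attacking LIP and RIP directly. By Theorem \ref{thm3.5}, a loop with universal $\alpha$-elasticity has the left inverse property exactly when it satisfies the right $\alpha$-alternative law $(xy)y^{\alpha}=x(yy^{\alpha})$, and by Theorem \ref{thm3.4} it has the right inverse property exactly when it satisfies the left $\alpha$-alternative law $y^{\alpha}\cdot yz=y^{\alpha}y\cdot z$. So it suffices to prove that the two $\alpha$-alternative laws are equivalent in a generalized Moufang loop with universal $\alpha$-elasticity. I would first record the clean fact that the right $\alpha$-alternative law is automatic here: putting $z=1$ in the defining identity $(xy\cdot z)y^{\alpha}=x(y\cdot zy^{\alpha})$ collapses it to $(xy)y^{\alpha}=x(yy^{\alpha})$. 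By Theorem \ref{thm3.5} this already forces LIP, so the implication RIP $\Rightarrow$ LIP is immediate and the content of the theorem lies in the implication LIP $\Rightarrow$ RIP, equivalently in establishing the left $\alpha$-alternative law.

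For the substantive direction I would take the hypothesis LIP in the form $x=y^{-1}(yx)$ furnished by the proof of Theorem \ref{thm3.5} and feed an inverse substitution into one of the universal identities (\ref{UNI1}), (\ref{UNI2}). The aim is to choose the free parameters so that the repeated factor $y^{\alpha}$ -- which the generalized Moufang identity only ever deposits on the right -- is carried to the left: replacing $x$ by a suitable inverse element and cancelling with LIP should turn the available right $\alpha$-alternative pattern into $y^{\alpha}\cdot yz=y^{\alpha}y\cdot z$. Once the left $\alpha$-alternative law is secured, Theorem \ref{thm3.4} returns RIP and the proof closes.

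As a cross-check, and to exhibit the symmetry, I would also run the dual computation directly: starting from RIP in the form $by^{\alpha}\cdot y^{-\alpha}=b$ extracted in the proof of Theorem \ref{thm3.4}, substitution into (\ref{UNI2}) should reproduce the right $\alpha$-alternative law, after which Theorem \ref{thm3.5} gives LIP. The preceding corollary, which identifies the inverse-property loops among these loops as precisely those satisfying both $\alpha$-alternative laws, serves as a consistency test on both passages.

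The main obstacle is the built-in asymmetry of the generalized Moufang identity: the repeated factor $y^{\alpha}$ is pinned on the right, so moving from a right-sided alternative law to a left-sided one is not formal and forces the insertion of an inverse element -- which is exactly the part played by the inverse-property hypothesis. Closing the substitution cleanly also needs $\alpha$ to be compatible with inversion, i.e. $(y^{\alpha})^{-1}=(y^{-1})^{\alpha}$ so that the symbol $y^{-\alpha}$ is unambiguous; I would either verify this from the standing hypotheses or record it as the precise assumption under which the two $\alpha$-alternative laws interchange.
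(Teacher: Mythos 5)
Your reduction is attractive and partly sound: given Theorem \ref{thm3.5}, the observation that setting $z=1$ in the generalized Moufang identity yields the right $\alpha$-alternative law $(xy)y^{\alpha}=x(yy^{\alpha})$ (exactly the observation the paper itself makes at the start of Section \ref{SEC:4}) does make LIP automatic, so the implication RIP $\Rightarrow$ LIP comes for free. You correctly isolate the real content as the other direction. But that is precisely where the proposal stops being a proof. Your plan for LIP $\Rightarrow$ RIP is ``feed an inverse substitution into one of the universal identities \dots choose the free parameters so that the repeated factor $y^{\alpha}$ is carried to the left'' --- no specific substitution is named, no computation is performed, and you yourself concede that passing from the right-sided to the left-sided alternative law ``is not formal.'' That passage is the entire theorem; describing the intention to find suitable parameters is not the same as exhibiting them. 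The unresolved side condition $(y^{\alpha})^{-1}=(y^{-1})^{\alpha}$, which you flag but neither verify nor discharge, compounds the gap: if your route genuinely needs it, you have proved a different (weaker) statement than the one asserted.

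For comparison, the paper does not route through the $\alpha$-alternative laws at all. It works directly with the universality identities: specializing (\ref{UNI2}) at $x=e$ with $z\mapsto bz$ gives $(by\cdot z)y^{\alpha}=by\cdot(b\backslash(bz\cdot y^{\alpha}))$, and specializing (\ref{UNI1}) at $b=e$ with $z\mapsto zx$ gives $y(z\cdot y^{\alpha}x)=[(y\cdot zx)/x]\cdot y^{\alpha}x$; choosing $z$ to be the appropriate inverse element ($z=(y^{\alpha}x)^{-1}$, resp.\ $z=(by)^{-1}$) and invoking the assumed one-sided inverse property then yields the other inverse property in each direction. If you want to salvage your skeleton, you must supply an analogous explicit substitution deriving the left $\alpha$-alternative law $y^{\alpha}\cdot yz=y^{\alpha}y\cdot z$ from LIP; until that computation is written down, the proof is incomplete.
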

\begin{proof}
Let $Q(\cdot)$ be a RIP loop with universal $\alpha$-elasticity. Replace $z$ by $bz$ and $x$ by $e$ in (\ref{UNI2}) to obtain
\begin{equation}\label{IP}
by\cdot (b\backslash (bz\cdot y^{\alpha}))=(by\cdot z)y^{\alpha}
\end{equation}
if $y=(by)^{-1}$
\begin{equation}\label{IP1}
b\left[(by)^{-1}\cdot y^{\alpha}\right]=b(by)^{-1}y^{\alpha}
\end{equation}
Now let $z$ be replaced by $zx$ and $b$ be replaced by $e$ in (\ref{UNI1}),we obtain
\begin{equation}\label{IP2}
y(z\cdot y^{\alpha}x)=\left[(y\cdot zx)/x\right]\cdot y^{\alpha}x
\end{equation} 
and suppose $z=(y^{\alpha}x)^{-1}$ in (\ref{IP2}), then,
$$\left[y/y^{\alpha}x\right]x=y((y^{\alpha}x)^{-1}x).$$
put $y=b$, and $x=y$
\begin{equation}
\left[b/b^{\alpha}y\right]y=b((b^{\alpha}y)^{-1}y)
\end{equation}
$$b/b^{\alpha}y=b(b^{\alpha}y)^{-1}$$
$$b=b(b^{\alpha}y)^{-1}\cdot (b^{\alpha}y)$$
Hence, $$b=bx^{-1}\cdot x.$$
For every $y,b\in Q$ i.e $Q(\cdot)$ is a LIP-loop.\\
Conversely, Let $Q(\cdot)$ be a LIP-loop with universal $\alpha$-elasticity. Substitute $z=(y^{\alpha}x)^{-1}$ in identity (\ref{IP2}).
$$y=\left[(y\cdot (y^{\alpha}x)^{-1}x)/x\right]y^{\alpha}x$$
\begin{equation}\label{IP3}
\left[y(y^{\alpha}x)^{-1}\right]x=y(y^{\alpha}x)^{-1}x
\end{equation}
Put $z=(by)^{-1}$ in (\ref{IP}),
$$by\cdot (b\backslash (b(by)^{-1}\cdot y^{\alpha}))=y^{\alpha}$$
$$b\left[(by)^{-1}\cdot y^{\alpha}\right]=b(by)^{-1}y^{\alpha}$$
$$b\left[by\backslash y^{\alpha}\right]=b(by)^{-1}y^{\alpha}$$
Using the last identity and (\ref{IP3}), we get
$$b(by\backslash y^{\alpha})=b((by)^{-1}y^{\alpha})$$ Or 
$$b(b^{-1}y^{\alpha})=y^{\alpha}$$ for all $b,y\in Q$ and $\alpha:Q\longrightarrow Q$. i.e $Q$ is a RIP-loop,
\end{proof}
\begin{corollary}\label{cor1}
Let $Q(\cdot)$ be a generalized Moufang loop with universal $\alpha$-elasticity, then the following are equivalent
\begin{enumerate}
	\item $Q(\cdot)$ has right $\alpha$-alternative property
	\item $Q(\cdot)$ has left inverse property
	\item $Q(\cdot)$ has right inverse property
	\item $Q(\cdot)$ has left $\alpha$-alternative property
\end{enumerate}
\end{corollary}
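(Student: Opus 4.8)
The plan is to assemble the corollary directly from the three preceding results rather than to establish any equivalence from scratch, since each of the four listed conditions has already been paired with one of the two inverse properties. First I would invoke Theorem~\ref{thm3.4}, which states that a loop with universal $\alpha$-elasticity satisfies the left $\alpha$-alternative law precisely when it has the right inverse property; this gives the equivalence of items $(4)$ and $(3)$. Next I would apply Theorem~\ref{thm3.5}, which identifies the right $\alpha$-alternative law with the left inverse property under the same standing hypothesis, yielding the equivalence of items $(1)$ and $(2)$.

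The two remaining conditions, namely the left and right inverse properties, are bridged by Theorem~\ref{thm3.6}, which asserts that for a generalized Moufang loop with universal $\alpha$-elasticity the loop has LIP if and only if it has RIP, i.e.\ the equivalence of items $(2)$ and $(3)$. Chaining these three biconditionals produces a single chain $(1)\Leftrightarrow(2)\Leftrightarrow(3)\Leftrightarrow(4)$, so that any one of the four properties forces the other three and all are equivalent.

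The one point that merits attention---and the closest thing here to an obstacle---is the compatibility of hypotheses: Theorems~\ref{thm3.4} and~\ref{thm3.5} are stated for an arbitrary loop with universal $\alpha$-elasticity, whereas Theorem~\ref{thm3.6} explicitly carries the stronger assumption that the loop be a generalized Moufang loop. Since the corollary posits $Q(\cdot)$ to be a generalized Moufang loop with universal $\alpha$-elasticity, all three theorems apply simultaneously to the same object, so no additional verification is needed. Once this alignment of hypotheses is confirmed, the equivalence of the four items is immediate and involves no new computation.
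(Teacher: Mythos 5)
Your proposal is correct and follows essentially the same route as the paper, which likewise derives the corollary by combining Theorems~\ref{thm3.4}, \ref{thm3.5} and \ref{thm3.6} into the chain $(1)\Leftrightarrow(2)\Leftrightarrow(3)\Leftrightarrow(4)$. Your explicit check that the generalized Moufang hypothesis makes all three theorems applicable is a welcome detail the paper leaves implicit.
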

\begin{proof}
The proof of this corollary follows from Theorem \ref{thm3.4}, Theorem \ref{thm3.5} and Theorem \ref{thm3.6}.
\end{proof}	
\begin{remark}
	What is customary in the theory of loops is that the right and middle nucleus coincide in right inverse property loops. Since generalized Moufang loops are right inverse property loops, the right and middle nucleus coincide. Despite the fact that generalized Moufang loops with universal $\alpha$-elasticity are right inverse property loops as proven in Theorem \ref{thm3.4}, the right and middle nucleus do not coincide. It must be noted that from Theorem \ref{thm3.2} the left and middle nucleus coincide, a clear deviation from what is obtainable in the theory of loops.
\end{remark}

\begin{remark}
One can infer from Theorem (\ref{thm3.6}) and Corollary (\ref{cor1}) that generalized Moufang loop with universal $\alpha$-elasticity property is an IP loop. It is known that all nuclei of IP loop coincide \cite{P}, such fact cannot be said of generalized Moufang loop with universal $\alpha$-elasticity property due to the statement of Theorem \ref{thm3.2}.
\end{remark}
\section{Associativity in the loops with universal $\alpha$-elasticity property}\label{SEC:5}
In what follows, we use the left and right $\alpha$-alternative laws of generalized Moufang loops with universal $\alpha$-elasticity property to establish that commutative IP-generalized Moufang loop with universal $\alpha$-elasticity property is an abelian group. The result is included in the following theorem.
\begin{thm}\label{CIPGBL}
Commutative IP-generalized Moufang loop with universal $\alpha$-elasticity property and $(y^{\alpha})^{2}\in N_{\rho}$ for all $y\in Q$ is associative.
\end{thm}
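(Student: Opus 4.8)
The plan is to reduce the whole statement to the single assertion that $y^{\alpha}\in N_{\rho}$ for every $y\in Q$, and then to squeeze that out of the hypothesis $(y^{\alpha})^{2}\in N_{\rho}$. First I would assemble the tools. Since $Q$ is an $IP$-loop it is both $LIP$ and $RIP$, so by Corollary \ref{cor1} it satisfies the left and right $\alpha$-alternative laws. Commutativity gives $L_{a}=R_{a}$ for every $a$, so the right $\alpha$-alternative law $(xy)y^{\alpha}=x(yy^{\alpha})$ becomes $R_{y^{\alpha}}R_{y}=R_{yy^{\alpha}}$, while $\alpha$-elasticity $(y\cdot z)y^{\alpha}=y(z\cdot y^{\alpha})$ says $R_{y^{\alpha}}$ commutes with $L_{y}=R_{y}$; together these give $R_{y}R_{y^{\alpha}}=R_{y^{\alpha}}R_{y}=R_{yy^{\alpha}}$. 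Commutativity also forces $N_{\lambda}=N_{\rho}$ (if $x(yz)=(xy)z$ then, reading every product backwards, $(yz)x=y(zx)$), so Theorem \ref{thm3.2}(i) sharpens to the symmetric equivalence $y^{\alpha}\in N_{\rho}\iff y\in N_{\rho}$, which lets me move an element and its $\alpha$-image in and out of the nucleus at will.

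Next I would establish the reduction. Feeding $\alpha$-elasticity $y\cdot zy^{\alpha}=yz\cdot y^{\alpha}$ into the generalized Moufang identity $(xy\cdot z)y^{\alpha}=x(y\cdot zy^{\alpha})$ gives $(xy\cdot z)y^{\alpha}=x(yz\cdot y^{\alpha})$. If $y^{\alpha}\in N_{\rho}$, the right-hand side rewrites as $(x\cdot yz)y^{\alpha}$, and cancelling the common right factor $y^{\alpha}$ by the right inverse property yields $xy\cdot z=x\cdot yz$ for all $x,z$. As $y$ is arbitrary this is full associativity, and a commutative associative loop is an abelian group. Thus everything rests on proving $y^{\alpha}\in N_{\rho}$ for each $y$.

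The crux, and the step I expect to be the main obstacle, is promoting $(y^{\alpha})^{2}\in N_{\rho}$ to $y^{\alpha}\in N_{\rho}$ — in effect a square-root extraction inside the nucleus. The natural route is to write $(xw)(y^{\alpha})^{2}=((xw)y^{\alpha})y^{\alpha}$ and $x(w(y^{\alpha})^{2})=x((wy^{\alpha})y^{\alpha})$ and compare them through the nuclearity identity $(xw)(y^{\alpha})^{2}=x(w(y^{\alpha})^{2})$. The delicate point is that this rewriting asks for the \emph{ordinary} right-alternative law at $y^{\alpha}$, namely $(uy^{\alpha})y^{\alpha}=u(y^{\alpha})^{2}$, whereas the $\alpha$-alternative law of Corollary \ref{cor1} pairs $y$ with $y^{\alpha}$ and, upon the naive substitution $y\mapsto y^{\alpha}$, produces the unwanted iterate $(y^{\alpha})^{\alpha}$; hence it cannot simply be applied twice to $y^{\alpha}$. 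I would bridge this gap by using commutativity together with the generalized Moufang identity and the translation relation $R_{y}R_{y^{\alpha}}=R_{yy^{\alpha}}$ from the first paragraph to collapse the double right multiplication by $y^{\alpha}$ into a single multiplication by $(y^{\alpha})^{2}$, and only then invoke $(y^{\alpha})^{2}\in N_{\rho}$ to carry the association across an arbitrary left factor. The real danger to guard against here is circularity, since by Theorem \ref{thm3.2} the assertion $y^{\alpha}\in N_{\rho}$ for all $y$ is itself equivalent to associativity; the argument must therefore genuinely consume the square hypothesis rather than re-derive the conclusion. Once the resulting identity is cleared of the inner occurrence of $y^{\alpha}$, the right inverse property cancels it to give $(xw)y^{\alpha}=x(wy^{\alpha})$, that is $y^{\alpha}\in N_{\rho}$, and the reduction of the second paragraph then finishes the proof that $Q$ is an abelian group.
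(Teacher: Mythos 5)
Your reduction in the second paragraph is sound: if $y^{\alpha}\in N_{\rho}$ for every $y$, then $\alpha$-elasticity turns the generalized Moufang identity into $(xy\cdot z)y^{\alpha}=(x\cdot yz)y^{\alpha}$, cancellation of $R_{y^{\alpha}}$ gives $y\in N_{\mu}$ for all $y$, and associativity follows. But the proof then stands or falls on the ``square-root extraction'' $(y^{\alpha})^{2}\in N_{\rho}\Rightarrow y^{\alpha}\in N_{\rho}$, and that step is not actually carried out. You correctly diagnose the obstruction yourself: the available right $\alpha$-alternative law is $(xy)y^{\alpha}=x(yy^{\alpha})$, and substituting $y\mapsto y^{\alpha}$ produces the iterate $(y^{\alpha})^{\alpha}$, not a second copy of $y^{\alpha}$. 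The tools you propose to bridge this do not close the gap: the operator relation you derive is $R_{y^{\alpha}}R_{y}=R_{yy^{\alpha}}$ (first multiply by $y$, then by $y^{\alpha}$), which collapses $R_{y^{\alpha}}\circ R_{y}$ but says nothing about $R_{y^{\alpha}}\circ R_{y^{\alpha}}$; you never obtain the ordinary right-alternative identity $(uy^{\alpha})y^{\alpha}=u(y^{\alpha})^{2}$ that your comparison of $(xw)(y^{\alpha})^{2}$ with $x(w(y^{\alpha})^{2})$ requires. Since, as you note via Theorem \ref{thm3.2}, the assertion ``$y^{\alpha}\in N_{\rho}$ for all $y$'' is itself equivalent to associativity, the unproved step is the entire content of the theorem, not a technical detail.

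The paper avoids this reduction altogether. It first proves a lemma merging (\ref{UNI1}) and (\ref{UNI2}) into the single identity (\ref{EQUI}), $(yx\cdot bz)x\cdot by^{\alpha}=yx\cdot b(zx\cdot by^{\alpha})$, valid in the IP case. It then substitutes $x\mapsto y^{\alpha}x$, $b\mapsto by^{\alpha}$ and $z=x$, reaching $(by^{\alpha}\cdot y^{\alpha}x)b=x(by^{\alpha})^{2}$, and uses commutativity, the replacement $y^{\alpha}\mapsto b^{-1}y^{\alpha}$, a single re-association licensed by $(y^{\alpha})^{2}$ lying in the nucleus, and left cancellation of $y^{\alpha}$ (LIP) to arrive at $y^{\alpha}b^{-1}\cdot x=y^{\alpha}\cdot b^{-1}x$. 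In other words, the hypothesis on $(y^{\alpha})^{2}$ is consumed inside one chain of rewrites of a concrete identity, not promoted to nuclearity of $y^{\alpha}$ first. If you want to salvage your outline, you would need either to prove the ordinary right-alternative law at $y^{\alpha}$ from the given hypotheses (which does not appear to follow) or to abandon the reduction and work directly with (\ref{EQUI}) as the paper does.
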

Before we prove Theorem \ref{CIPGBL}, we state the following lemma which is instrumental in the proof of the theorem.
\begin{lem}
Let $(Q,\cdot)$ be an IP-generalized Moufang loop with $\alpha$-universal elasticity property, then identities (\ref{UNI1}) and (\ref{UNI2}) are equivalent to 
\begin{equation}\label{EQUI}
(yx\cdot bz)x\cdot by^{\alpha}=yx\cdot b(zx\cdot by^{\alpha}).
\end{equation}
\end{lem}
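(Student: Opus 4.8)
The plan is to exploit the inverse property to turn both division symbols into multiplications by two‑sided inverses, and then to recognise (\ref{EQUI}) as nothing but the isotope identity underlying (\ref{UNI1}) and (\ref{UNI2}) after a harmless, bijective change of variables. Since $(Q,\cdot)$ is an IP loop, every element has a two‑sided inverse and $y/x=yx^{-1}$, $b\backslash w=b^{-1}w$ for all $x,y,w,b\in Q$; moreover $x\mapsto x^{-1}$ is a bijection of $Q$. These are exactly the facts that make the right and left divisions appearing in (\ref{UNI1}) and (\ref{UNI2}) collapse, so I would carry them out first.

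Next I would return to the single isotope identity produced in the proof of the universality theorem, namely
\[
\big[((y/x)(b\backslash z))/x\big](b\backslash y^{\alpha})=(y/x)\big[b\backslash((z/x)(b\backslash y^{\alpha}))\big],
\]
of which (\ref{UNI1}) and (\ref{UNI2}) are the two repackagings obtained by transposing the factor $x$ across a division. Substituting $y/x=yx^{-1}$, $z/x=zx^{-1}$ and $b\backslash(\cdot)=b^{-1}(\cdot)$ converts this into
\[
\big[((yx^{-1})(b^{-1}z))x^{-1}\big](b^{-1}y^{\alpha})=(yx^{-1})\big[b^{-1}((zx^{-1})(b^{-1}y^{\alpha}))\big].
\]

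Now I would apply the substitution $x\mapsto x^{-1}$, $b\mapsto b^{-1}$, which is legitimate because both maps are bijections of $Q$ and neither $x$ nor $b$ occurs inside the symbol $y^{\alpha}$. Using $(x^{-1})^{-1}=x$ and $(b^{-1})^{-1}=b$, the left‑hand side becomes $(yx\cdot bz)x\cdot by^{\alpha}$ and the right‑hand side becomes $yx\cdot b(zx\cdot by^{\alpha})$, which is precisely (\ref{EQUI}). Because the substitution is a bijection, every step reverses: feeding (\ref{EQUI}) through $x\mapsto x^{-1}$, $b\mapsto b^{-1}$ and the relations $yx^{-1}=y/x$, $b^{-1}w=b\backslash w$ returns the isotope identity above, hence recovers both (\ref{UNI1}) and (\ref{UNI2}). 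This yields the asserted equivalence in both directions.

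The step I expect to be the main obstacle is the bookkeeping around the map $\alpha$: one must confirm that $y^{\alpha}$ is genuinely inert under $x\mapsto x^{-1}$ and $b\mapsto b^{-1}$ (it is, since it depends on $y$ alone) and that no hidden appeal to $(y^{\alpha})^{-1}$ or to $\alpha$ commuting with inversion is being smuggled in. The second point to check carefully is that (\ref{UNI1}) and (\ref{UNI2}) really are equivalent, under IP, to the single isotope identity written above — i.e. that transposing $x$ across a division becomes a trivial move once $/x$ is $\cdot\,x^{-1}$ — so that it is genuinely the conjunction of the two conditions, rather than just one of them, that matches (\ref{EQUI}).
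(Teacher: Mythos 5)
Your proposal is correct and rests on the same two ingredients as the paper's proof: the inverse property turns $y/x$ into $yx^{-1}$ and $b\backslash w$ into $b^{-1}w$, and a bijective change of variables then produces (\ref{EQUI}). The difference is where the substitution is performed. The paper substitutes separately into (\ref{UNI1}) and (\ref{UNI2}), each with its own replacement list ($y\mapsto yx^{-1}$, $z\mapsto b^{-1}z$, $x\mapsto x^{-1}$, $b\mapsto b^{-1}$, together with ``$y^{\alpha}\mapsto y^{\alpha}x^{-1}$'' resp.\ ``$y^{\alpha}\mapsto b^{-1}y^{\alpha}$''), the last of which treats $y^{\alpha}$ as a variable independent of $y$ --- a move that is not legitimate in general. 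You instead return to the single isotope identity $\bigl[\bigl((y/x)(b\backslash z)\bigr)/x\bigr](b\backslash y^{\alpha})=(y/x)\bigl[b\backslash\bigl((z/x)(b\backslash y^{\alpha})\bigr)\bigr]$ from which the paper extracted both (\ref{UNI1}) and (\ref{UNI2}), rewrite the divisions via IP, and substitute only $x\mapsto x^{-1}$ and $b\mapsto b^{-1}$; since $y$ is untouched, $y^{\alpha}$ is genuinely inert, and the computation lands exactly on (\ref{EQUI}) and reverses step by step. This is cleaner and more defensible than the paper's version. The residual debt, which you correctly flag, is that your argument identifies (\ref{EQUI}) with the isotope identity, and hence with the pair (\ref{UNI1})--(\ref{UNI2}) only insofar as the paper's own passage from the isotope identity to those two forms (transposing $x$ and $b$ across divisions) is reversible; that gap, if it is one, belongs to the paper's Theorem 3.1 rather than to your proof, and your argument assumes nothing beyond what the paper's proof already does.
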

\begin{proof}
Recall that in an IP-loop $x\backslash y=x^{-1}y$ and $x/y=xy^{-1}$. Using this, and replacing $y$ by $yx^{-1}$, $z$ by $b^{-1}z$, $x^{-1}$ by $x$, $b^{-1}$ by $b$ and $y^{\alpha}$ by $y^{\alpha}x^{-1}$ in (\ref{UNI1}) and on replacing $y$ by $b^{-1}y$, $z$ by $zx^{-1}$, $x^{-1}$ by $x$, $b^{-1}$ by $b$ and $y^{\alpha}$ by $b^{-1}y^{\alpha}$ in (\ref{UNI2}), we obtain
$$(yx\cdot bz)x\cdot by^{\alpha}=yx\cdot b(zx\cdot by^{\alpha}).$$
\end{proof}
We are now poised to give the proof of Theorem (\ref{CIPGBL}).
\begin{proof}
Replace $x$ by $y^{\alpha}x$ and $b$ by $by^{\alpha}$ in (\ref{EQUI}) and using the fact that $(y^{\alpha})^{2}\in N_{\rho}$, we obtain $$\left[x(by^{\alpha}\cdot z)y^{\alpha}x\right]b=x\left[by^{\alpha}\cdot (z\cdot y^{\alpha}x)\right]b.$$ If $z=x$, the equation above becomes $$\left(by^{\alpha}\cdot y^{\alpha}x\right)b=x\left(by^{\alpha}\right)^{2}.$$
Using commutativity of $Q(\cdot)$ and replacing $y^{\alpha}$ by $b^{-1}y^{\alpha}$ in the equation above, we get 
$$y^{\alpha}\left(b^{-1}y^{\alpha}\cdot x\right)\cdot b=x\left(y^{\alpha}\right)^{2}.$$
\begin{align*}
y^{\alpha}\left(b^{-1}y^{\alpha}\cdot x\right)&=x\left(y^{\alpha}\right)^{2}\cdot b^{-1}\\&=\left(y^{\alpha}\right)^{2}x\cdot b^{-1}=\left(y^{\alpha}\right)^{2}\cdot xb^{-1}=y^{\alpha}\left(y^{\alpha}\cdot xb^{-1}\right)\\&
b^{-1}y^{\alpha}\cdot x=y^{\alpha}\cdot xb^{-1}\\&
y^{\alpha}b^{-1}\cdot x=y^{\alpha}\cdot b^{-1}x
\end{align*}
Thus, the loop $Q(\cdot)$ is an abelian group. 
\end{proof}
\begin{thm}
The identity (\ref{EQUI}) implies both inverse properties in the generalized Moufang loop with universal $\alpha$-elasticity property.
\end{thm}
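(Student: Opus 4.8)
The plan is to treat (\ref{EQUI}) as a standalone hypothesis holding for all elements and to recover each inverse property by specialising its free variables to the unit $e$ and to one-sided inverses, exploiting that in a loop every translation is a bijection so that cancellation is always available. Since the ambient loop is a generalized Moufang loop with universal $\alpha$-elasticity, once one inverse property is in hand the other is automatic by Theorem \ref{thm3.6} (equivalently by Corollary \ref{cor1}); nonetheless I would derive each one directly, to make transparent that (\ref{EQUI}) alone is responsible for both.

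For the right inverse property I would first set $b=e$ in (\ref{EQUI}), which collapses it to $((yx\cdot z)x)y^{\alpha}=yx\cdot(zx\cdot y^{\alpha})$. Choosing $z$ to be the element $e/x$ (so that $zx=e$) makes the inner product on the right-hand side equal to $y^{\alpha}$, leaving $((yx\cdot (e/x))x)y^{\alpha}=(yx)y^{\alpha}$. Cancelling $y^{\alpha}$ on the right and writing $u=yx$ (which ranges over all of $Q$ as $y$ varies) yields $(u\cdot(e/x))x=u$ for all $u,x$. Re-indexing by $c=e/x$, so that $x=c^{\rho}$ and $c=(c^{\rho})^{\lambda}$, this is precisely $(uc)c^{\rho}=u$, the right inverse property.

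For the left inverse property I would instead set $x=e$ in (\ref{EQUI}), obtaining $(y\cdot bz)(by^{\alpha})=y(b(z\cdot by^{\alpha}))$, and then choose $z$ with $bz=e$. The left-hand side becomes $y(by^{\alpha})$, and cancelling $y$ and then $b$ on the left reduces the identity to $y^{\alpha}=z(by^{\alpha})$ with $z$ the right inverse of $b$; since $\alpha$ maps $Q$ onto itself, $y^{\alpha}$ sweeps out all of $Q$, so this reads $b^{\rho}(bt)=t$ for all $b,t$, the left inverse property. If one prefers not to invoke surjectivity of $\alpha$, the left inverse property follows instead from the right inverse property just obtained together with Theorem \ref{thm3.6}.

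The routine content is only the bracket bookkeeping and the loop cancellations; the one genuinely delicate point is the re-indexing step that converts the derived relation $(u\cdot(e/x))x=u$ into the canonical right-inverse form $(uc)c^{\rho}=u$, which uses that $x\mapsto e/x$ is a bijection of $Q$ and that $c\cdot c^{\rho}=e$ forces $(c^{\rho})^{\lambda}=c$. I expect this identification, rather than any computation, to be where care is needed.
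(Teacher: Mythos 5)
Your proof is correct and takes essentially the same route as the paper's: set $b=e$ and $z=e/x$ (the paper's ``$x^{-1}$'') in (\ref{EQUI}) and cancel $y^{\alpha}$ to get RIP, then set $x=e$ and $z=b\backslash e$ and cancel $y$ and $b$ on the left to get LIP. You are in fact slightly more careful than the paper about which one-sided inverse occurs at each step and about the surjectivity of $\alpha$ needed to pass from $y^{\alpha}=b^{\rho}(by^{\alpha})$ to the full left inverse property.
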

\begin{proof}
Let $b=e$ and $z=x^{-1}$ in (\ref{EQUI}), then 
\begin{align*}
(yx\cdot x^{-1})x\cdot y^{\alpha}&=yx\cdot y^{\alpha}&\\
yx\cdot x^{-1}&=y
\end{align*}
for all $x,y\in Q$. Hence, $Q(\cdot)$ is a RIP loop.
Also, let $x=e$ and $z=b^{-1}$
\begin{align*}
y\cdot by^{\alpha}&=y\cdot b(b^{-1}\cdot by^{\alpha})&\\
y^{\alpha}&=b^{-1}\cdot by^{\alpha}
\end{align*}
for every $y,b\in Q$ and $\alpha:Q\longrightarrow Q$. Hence, $Q(\cdot)$ is a LIP loop.
\end{proof}
\begin{corollary}
The generalized Moufang loop which satisfies the identity (\ref{EQUI}) is a loop with universal $\alpha$-elasticity property.
\end{corollary}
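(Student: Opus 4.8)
The plan is to close the chain of equivalences assembled in this section: I will show that a generalized Moufang loop obeying (\ref{EQUI}) must also satisfy the two universality conditions (\ref{UNI1}) and (\ref{UNI2}), at which point the main theorem of Section \ref{SEC:3} yields universal $\alpha$-elasticity. The only datum I am given is that $Q(\cdot)$ is a generalized Moufang loop satisfying (\ref{EQUI}); since every generalized Moufang loop already obeys the $\alpha$-elasticity identity (set $x=1$), the main theorem is applicable once (\ref{UNI1}) and (\ref{UNI2}) are in hand.

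First I would upgrade $Q(\cdot)$ to an inverse-property loop. The preceding theorem specializes (\ref{EQUI}) by the substitutions $b=e,\ z=x^{-1}$ and $x=e,\ z=b^{-1}$ and then cancels to conclude that $Q(\cdot)$ is both RIP and LIP. That argument uses only loop cancellation applied to (\ref{EQUI}) and does not in fact invoke universal $\alpha$-elasticity, so it is legitimately available here and no circularity arises from the phrasing of its hypothesis. Having secured the inverse property, I may replace $x\backslash y$ by $x^{-1}y$ and $x/y$ by $xy^{-1}$ throughout, which is precisely the translation underlying the Lemma.

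The heart of the proof is to reverse the two substitution schemes recorded in the Lemma. For (\ref{UNI1}) one rewrites the divisions via IP and applies $y\mapsto yx^{-1}$, $z\mapsto b^{-1}z$, $x\mapsto x^{-1}$, $b\mapsto b^{-1}$, $y^{\alpha}\mapsto y^{\alpha}x^{-1}$ to reach (\ref{EQUI}); for (\ref{UNI2}) one applies $y\mapsto b^{-1}y$, $z\mapsto zx^{-1}$, $x\mapsto x^{-1}$, $b\mapsto b^{-1}$, $y^{\alpha}\mapsto b^{-1}y^{\alpha}$ to reach the same identity. Each replacement is a bijection of $Q$ (inversion and the translations are permutations), so each scheme is invertible as a change of variables. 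Applying the two inverse schemes to (\ref{EQUI}) therefore reconstructs (\ref{UNI1}) and (\ref{UNI2}) respectively, and both now hold in $Q(\cdot)$.

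Finally I would appeal to the main theorem of Section \ref{SEC:3}: for a generalized Moufang loop the $\alpha$-elasticity identity is universal if and only if (\ref{UNI1}) and (\ref{UNI2}) hold. Since both have just been established, $Q(\cdot)$ enjoys the universal $\alpha$-elasticity property, which is the assertion. The main obstacle is the third paragraph: I must verify that the Lemma's substitution scheme is genuinely invertible, and in particular that the formal replacement of $y^{\alpha}$ can be undone consistently with $\alpha$ being a fixed self-map, so that reversing the steps returns exactly (\ref{UNI1}) and (\ref{UNI2}) rather than some weaker consequence of (\ref{EQUI}).
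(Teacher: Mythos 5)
The paper states this corollary without any proof, so there is nothing to compare line-by-line; your reconstruction (get IP from the preceding theorem, invert the Lemma's substitutions to recover (\ref{UNI1}) and (\ref{UNI2}), then invoke the first theorem of Section \ref{SEC:3}) is clearly the route the authors intend, and your observation that the preceding theorem's proof uses only (\ref{EQUI}) plus cancellation, so no circularity enters despite its hypothesis mentioning universal $\alpha$-elasticity, is a necessary and correct repair.

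However, the step you yourself flag as ``the main obstacle'' is a genuine gap, and you do not close it. The Lemma's substitution scheme is not a change of variables: it replaces $y$ by $yx^{-1}$ while \emph{separately} replacing $y^{\alpha}$ by $y^{\alpha}x^{-1}$ (resp.\ $y$ by $b^{-1}y$ while replacing $y^{\alpha}$ by $b^{-1}y^{\alpha}$). For a fixed self-map $\alpha$, substituting $y\mapsto yx^{-1}$ in an identity forces every occurrence of $y^{\alpha}$ to become $(yx^{-1})^{\alpha}$, which equals $y^{\alpha}x^{-1}$ only under extra assumptions on $\alpha$ (e.g.\ $\alpha$ a homomorphism fixing $x$) that are not in the hypotheses. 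Consequently, running the scheme backwards from (\ref{EQUI}) does not return (\ref{UNI1}) and (\ref{UNI2}) verbatim: it returns versions in which the $\alpha$-image is attached to the wrong argument, i.e.\ a statement with $y^{\alpha}$ and $y$ decoupled. One can only salvage the argument by reading (\ref{EQUI}), (\ref{UNI1}) and (\ref{UNI2}) as identities in which the $\alpha$-marked occurrences are independent free variables, but that is a strictly stronger reading than the one under which the first theorem of Section \ref{SEC:3} was proved, so the final appeal to that theorem would no longer be justified. Your proof defers exactly this verification rather than supplying it, so as written the argument is incomplete; to repair it you must either impose conditions on $\alpha$ under which the substitutions commute with $\alpha$, or rework the Lemma so that its equivalence is established by honest changes of variables.
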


\bibliographystyle{apalike}

\end{document}